\theoremstyle{plain}
\newtheorem{theorem}{Theorem}[section]
\newtheorem{prop}[theorem]{Proposition}
\newtheorem{cor}[theorem]{Corollary}
\newtheorem{lemma}[theorem]{Lemma}
\theoremstyle{definition}
\newtheorem{defi}[theorem]{Definition}
\theoremstyle{remark}
\newtheorem{ex}[theorem]{Example}
\newcommand{\Cij}{\mathcal{C}_{ij}}
\newcommand{\Dij}{\mathcal{D}_{ij}}
\newcommand{\Eij}{\mathcal{E}_{ij}}
\newcommand{\Fp}{\mathbb{F}_p}
\newcommand{\Fq}{\mathbb{F}_q}
\newcommand{\RR}{\mathbb{R}}
\newcommand{\ZZ}{\mathbb{Z}}
\newcommand{\PP}{\mathbb{P}}
\newcommand{\im}{{\rm im}}
\newcommand{\coker}{{\rm coker}}
\title{Bivariate trinomials over finite fields}
\author{M.~Avenda\~no and J.~Mart\'\i n-Morales\footnote{Partially supported by
the Spanish Government MTM2016-76868-C2-2-P,
\emph{Grupo E15 Gobierno de Arag\'on/Fondo Social Europeo},
and FQM-333 from \emph{Junta de Andaluc{\'\i}a}.}}
\date{\today}
\newcommand{\Addresses}{{
  \bigskip
  \footnotesize

  M.~Avendano, \textsc{Centro Universitario de la Defensa, Academia General Militar,
  Ctra.~de Huesca s/n., 50090, Zaragoza, Spain}\par\nopagebreak
  \textit{E-mail address}: \texttt{avendano@unizar.es}

  \medskip

  J.~Mart\'{\i}n-Morales, \textsc{Centro Universitario de la Defensa, Academia General Militar,
  Ctra.~de Huesca s/n., 50090, Zaragoza, Spain}\par\nopagebreak
  \textit{E-mail address}: \texttt{jorge@unizar.es}

}}
\begin{document}

\maketitle

\begin{abstract}
We study the number of points in the family of plane curves defined by a trinomial
\[
  \mathcal{C}(\alpha,\beta)=
     \{(x,y)\in\Fq^2\,:\,\alpha x^{a_{11}}y^{a_{12}}+\beta x^{a_{21}}y^{a_{22}}=x^{a_{31}}y^{a_{32}}\}
\]
with fixed exponents (not collinear) and varying coefficients over finite fields. We prove that
each of these curves has an almost predictable number of points, given by a closed formula that
depends on the coefficients, exponents, and the field, with a small error term $N(\alpha,\beta)$
that is bounded in absolute value by $2\tilde{g}q^{1/2}$, where $\tilde{g}$ is a constant that
depends only on the exponents and the field. A formula for $\tilde{g}$ is provided, as well as a comparison of $\tilde{g}$ with the genus $g$ of the projective closure of the curve over
$\overline{\Fq}$. We also give several linear and quadratic identities for the numbers
$N(\alpha,\beta)$ that are strong enough to prove the estimate above, and in some cases, to
characterize them completely.
\end{abstract}

\section{Introduction}

The main result in this article is inspired by Theorem~\ref{thm_gauss} given below, proven by Gauss
in his book Disquisitiones Arithmeticae~\cite[Thm.~358]{Gauss}. We have used a mildly rephrased version
of the original theorem, taken from~\cite[page~111]{ST92}, that better matches our more modern notation.

\begin{theorem}[Gauss]\label{thm_gauss}
Let $p$ be an odd prime and let $M_p$ be the number of points in the projective curve
$\{[x:y:z]\in\mathbb{P}^2(\Fp)\,:\,x^3+y^3+z^3=0\}$.
\begin{enumerate}
\item If $p\not\equiv 1\pmod{3}$, then $M_p=p+1$.
\item If $p\equiv 1\pmod{3}$, then the equation $u^2+27\bar{v}^2=4p$ has a unique integer solution (up
 to the signs), and if $u$ is chosen such that $u\equiv 1\pmod{3}$, then $M_p=p+1+u$.
\end{enumerate}
\end{theorem}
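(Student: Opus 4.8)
The plan is to count the solutions by multiplicative character sums and to recognize the resulting correction term as a Jacobi sum in $\ZZ[\omega]$, $\omega=e^{2\pi i/3}$. First I would pass from the projective to the affine count: the cone over the curve consists of the origin together with $p-1$ nonzero points above each projective point, so if $N$ denotes the number of solutions of $x^3+y^3+z^3=0$ in $\Fp^3$, then $M_p=(N-1)/(p-1)$. For part~(a), when $p\not\equiv 1\pmod 3$ we have $\gcd(3,p-1)=1$, so $t\mapsto t^3$ is a bijection of $\Fp$; the substitution turns the equation into the linear one $X+Y+Z=0$, whose projective zero set is a line with exactly $p+1$ points, giving $N=p^2$ and $M_p=p+1$.

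For part~(b) I would fix a multiplicative character $\chi$ of $\Fp^*$ of exact order $3$ (available precisely because $3\mid p-1$), extended by $\chi(0)=\chi^2(0)=0$, $\chi^0(0)=1$. Using that the number of $t$ with $t^3=a$ equals $\sum_{j=0}^2\chi^j(a)$, I would write $N=\sum_{a+b+c=0}\big(\sum_j\chi^j(a)\big)\big(\sum_k\chi^k(b)\big)\big(\sum_\ell\chi^\ell(c)\big)$ and expand. The terms separate into a main term $p^2$ and cross-terms that I would rewrite via Gauss sums $g(\psi)=\sum_{t\in\Fp^*}\psi(t)\zeta_p^{t}$, using $g(\psi)g(\overline\psi)=\psi(-1)p$ for nontrivial $\psi$ and $g(\chi)^2=J(\chi,\chi)\,g(\chi^2)$. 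After dividing by $p-1$ and noting that $-1$ is a cube modulo $p$ (so the stray factors $\chi(-1)$ are trivial), the count collapses to $M_p=p+1+\big(J(\chi,\chi)+\overline{J(\chi,\chi)}\big)$, where $J(\chi,\chi)=\sum_{s+t=1}\chi(s)\chi(t)$ is the Jacobi sum.

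It then remains to analyze $J:=J(\chi,\chi)$. Since $\chi$ takes values in the cube roots of unity, $J=a+b\omega$ lies in $\ZZ[\omega]$, and the standard identity $J\overline J=p$ holds because both $\chi$ and $\chi^2$ are nontrivial. Setting $u:=J+\overline J=2a-b\in\ZZ$, the norm relation $a^2-ab+b^2=p$ gives $4p=(2a-b)^2+3b^2=u^2+3b^2$; once one knows $3\mid b$, writing $b=3\bar v$ yields $4p=u^2+27\bar v^2$, and $M_p=p+1+u$. The essential uniqueness of the representation (up to signs) follows from unique factorization in $\ZZ[\omega]$: the prime $p$ splits as $\pi\overline\pi$ with $\pi$ determined up to the six units and conjugation, and this pins $(u,\bar v)$ up to sign.

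The step I expect to be the genuine obstacle is fixing the sign, i.e.\ establishing the normalization $u\equiv 1\pmod 3$. The clean input is the congruence $J\equiv -1\pmod 3$ in $\ZZ[\omega]$, which forces simultaneously $b\equiv 0\pmod 3$ (so that the ``$27$'' appears at all) and $u=2a-b\equiv -2\equiv 1\pmod 3$. Reducing the defining sum modulo the prime $(1-\omega)$ above $3$ is easy: every term is a cube root of unity, hence $\equiv 1$, so $J\equiv p-2\equiv -1\pmod{(1-\omega)}$. Upgrading this to the modulus $3\sim(1-\omega)^2$ is the delicate classical part and requires a finer count of the terms of the Jacobi sum; everything preceding it is bookkeeping with Gauss and Jacobi sum identities, whereas this congruence is exactly what distinguishes $+u$ from $-u$ and selects the solution with $u\equiv 1\pmod 3$.
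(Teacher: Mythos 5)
Your route is genuinely different from the paper's. The paper obtains Gauss's theorem as a corollary of Theorem~\ref{mainth}: the relations \eqref{x2}, \eqref{x1}, \eqref{x5}, \eqref{x4} force the nine residual numbers $N_{ij}$ into a $3\times 3$ ``magic'' matrix determined by two integers $u,v$, the quadratic identity \eqref{x3} gives $u^2+v^2+uv=3p$ directly, and both the divisibility $9\mid 2v+u$ (which produces the $27$) and the normalization $u\equiv 1\pmod 3$ come from the elementary observation that $|\Cij^*|$ is divisible by $9$ via the $\mu_3\times\mu_3$ action $(x,y)\mapsto(\xi_3^rx,\xi_3^sy)$. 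You instead run the classical character-sum computation: your reduction $M_p=(N-1)/(p-1)$, the bijectivity of cubing in case (a), the expansion of $N$ into a main term $p^2$ plus the two surviving Jacobi-sum terms, the identity $M_p=p+1+J+\overline J$ with $J=J(\chi,\chi)\in\ZZ[\omega]$ of norm $p$, and the passage $4p=u^2+3b^2$ are all correct. What each approach buys: yours is self-contained number theory but needs the arithmetic of $\ZZ[\omega]$ and Gauss/Jacobi sums; the paper's is purely combinatorial counting and, more importantly for the authors, exhibits Gauss's theorem as a special case of a statement valid for arbitrary trinomials, where the quadratic relation $u^2+v^2+uv=3p$ appears as an instance of \eqref{x3} rather than as a norm form.

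There is, as you yourself flag, one unfinished step, and it is not optional: the congruence $J\equiv -1\pmod 3$ in $\ZZ[\omega]$ is exactly what yields both $3\mid b$ (hence the coefficient $27$) and $u=2a-b\equiv 1\pmod 3$; your reduction modulo the prime $(1-\omega)$ only gives half the needed information, and without the full congruence the statement you prove is strictly weaker than Gauss's (you cannot even write $4p=u^2+27\bar v^2$). The gap closes in two lines, though: from $g(\chi)^2=J(\chi,\chi)g(\chi^2)$ and $g(\chi)g(\chi^2)=\chi(-1)p=p$ one gets $g(\chi)^3=p\,J(\chi,\chi)$, while the multinomial theorem modulo $3$ gives $g(\chi)^3\equiv\sum_{t\neq 0}\chi(t)^3\zeta_p^{3t}=\sum_{t\neq 0}\zeta_p^{3t}=-1\pmod 3$; since $p\equiv 1\pmod 3$, this forces $J\equiv -1\pmod 3$. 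With that inserted, your argument is a complete and correct alternative proof; the uniqueness claim via unique factorization in $\ZZ[\omega]$ matches the paper's own final remark.
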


In a few words, Gauss' theorem says that the number of (projective) points in the plane curve
$x^3+y^3+z^3\equiv0\pmod{p}$ is $p+1$ plus a small error term $u$ (that only appears when
$p\equiv1\pmod{3}$) which is characterized by the quadratic equation $u^2+27\bar{v}^2=4p$ with integral
unknowns. Our main result (Thm.~\ref{mainth}) is a generalization of Gauss' theorem for any
non-degenerate trinomial equation in two variables, over any finite field, where we show that the
number of points is a predictable number (given by a closed formula in terms of the coefficients,
exponents, and the field) plus an error term which also satisfies an explicit quadratic equation in
many unknowns, all of them having a precise meaning (as opposed to Gauss' theorem, where only the
variable $u$ matters). 
More precisely, our result gives, in the case $p\equiv1\pmod{3}$, that $M_p=p+1+u$, where
$u^2+v^2+uv=3p$ for some $u,v\in\ZZ$. The symmetry of the curve allows one to rewrite it as
$u^2+27\bar{v}^2=4p$, where $\bar{v}=\frac{2v+u}{9}\in\ZZ$ and to show that $u\equiv1\pmod{3}$.
All the details are given in Section~\ref{gauss}.

Note that Gauss' theorem implies that the error term $u$ is bounded in absolute value by $2\sqrt{p}$.
This observation was generalized by Hasse to elliptic curves over finite fields~\cite[Ch.~5, Thm.~1.1]{Si86},
then by Weil to hypersurfaces defined by an equation of the type
$\alpha_0x_0^{a_0}+\alpha_1x_1^{a_1}+\dots+\alpha_rx_r^{a_r}=b$~\cite{Weil49}, which led to the statement of
the famous Weil's conjectures, finally proven by Dwork~\cite{Dwork}, Grothendieck~\cite{Groth}, and Deligne~\cite{Deligne}
for any smooth hypersurface.

With our approach, the estimate of the error follows from a simple computation using Lagrange multipliers (see
Prop.~\ref{lagrange}).
In contrast with the results above, our proof is elementary and the estimate is valid for any trinomial
(not necessarily smooth). Moreover, our estimate $2\tilde{g}q^{1/2}$ (see Cor.~\ref{maincor}) is better that the bound
obtained from Weil's conjectures $2gq^{1/2}$, since the genus $g$ is an invariant that only reflects the complex geometry
of the curve, while our $\tilde{g}$ includes also information about the field. In Section~\ref{sec4}, we obtain a closed formula for the genus $g$ of a trinomial plane curve (see Prop.~\ref{prop-genus}),
that can be compared term by term with the definition of $\tilde{g}$ given in~\eqref{eqdef}. For instance, in the case of Gauss' theorem,
the curve has genus $g=1$, but our $\tilde{g}$ is zero when $p\not\equiv1\pmod{3}$, hence capturing both cases
of the statement in a unified way.

A bound for trinomials (of the same type studied by Weil), that closely resembles ours, was obtained by Hua and
Vandiver~\cite{HV49}. However, their result follows from estimates using characters, while ours is a consequence
of a quadratic optimization problem over $\RR$. Some experiments show that a much better estimate could be computed
if we were able to solve the optimization problem over the integers (see Example~\ref{example3}).

\section{Statement of the results}

Let $p$ be a prime and $q=p^n$ for some $n\geq 1$. Let $\rho$ be a generator of the cyclic group $\Fq^*$. Consider the curve
\[ \Cij = \{ (x,y)\in \Fq^2\,:\, \rho^i x^{a_{11}}y^{a_{12}} + \rho^j x^{a_{21}}y^{a_{22}} = x^{a_{31}}y^{a_{32}} \},\]
and let $\Cij^*=\Cij\cap(\Fq^*)^2$.

To avoid a degenerate case, we assume that the exponents vectors $(a_{11},a_{12})$, $(a_{21},a_{22})$, $(a_{31},a_{32})$ are not collinear, i.e.~the matrix $B = \begin{bmatrix} b_{11} & b_{12} \\ b_{21} & b_{22}\end{bmatrix}
:=\begin{bmatrix} a_{11}-a_{31} & a_{12}-a_{32} \\ a_{21}-a_{31} & a_{22}-a_{32}\end{bmatrix}$ is invertible. 

We need the following constants derived from $B$:
\begin{equation}\label{eqdef}
\begin{aligned}
 d &= \gcd(b_{11},b_{12},q-1), \\
 e &= \gcd(b_{21},b_{22},q-1), \\
 f &= \gcd(b_{11}-b_{21}, b_{12}-b_{22},q-1), \\
 k &= \gcd((q-1)\gcd(d,e,f),\det(B)), \\
 w &= \begin{cases} 0 & q \text{ even}, \\ \frac{q-1}{2} & q \text{ odd}, \end{cases} \\
 \tilde{g} &= \frac{1}{2} ( k-d-e-f+2 ).
\end{aligned}
\end{equation}

The value $k$ corresponds to $|\coker(B)|$,  where $B$ is regarded as a group homomorphism
$B:\ZZ_{q-1}^2\to\ZZ_{q-1}^2$ given by the multiplication $v\mapsto Bv$ (see Lemma~\ref{orden}).

Our goal is to estimate the number of points $|\Cij|$ and $|\Cij^*|$ for all $i,j$.
Since $\rho^{q-1}=1$, the indices $i$ and $j$ can be regarded modulo $q-1$.

\begin{defi}
 $D_{\ell}(i)=\begin{cases} \ell & \text{if}\;\ell\,|\,i, \\ 0 & \text{otherwise.}\end{cases}$
\end{defi}

Note that $|\Cij| = |\Cij^*|+|\Cij\cap\{x=0,y\neq0\}|+|\Cij\cap\{y=0,x\neq0\}|+|\Cij\cap\{x=y=0\}|$,
and that the points in $\Cij\cap\{x=0,y\neq0\}$ and $\Cij\cap\{y=0,x\neq0\}$ correspond to the solutions in $\Fq^*$ of a univariate equation with at most two non-zero terms. Therefore, $|\Cij\cap\{x=0,y\neq0\}|$ and $|\Cij\cap\{y=0,x\neq0\}|$ can be computed exactly with a closed formula in terms of $i$, $j$, $q$,
and the exponents (see Lemma~\ref{lemaD2}). Moreover, $|\Cij\cap\{x=y=0\}|$ is either $1$ or $0$,
depending on whether $a_{11}+a_{12}$, $a_{21}+a_{22}$, and $a_{31}+a_{32}$ are all positive
or not. This means that $|\Cij|$ and $|\Cij^*|$ can be easily derived from each other. For this reason,
and to avoid discussing several cases depending on the configuration of the exponents, we present our
results only for $|\Cij^*|$, which can be done with a more uniform notation.

\begin{theorem}\label{mainth}
With the notation given above, we have
\begin{equation}\label{eq:defCij}
|\Cij^*| = q+1-D_d(i)-D_e(j)-D_f(i-j+w)+N_{ij}
\end{equation}
for some integers $N_{ij}$ that satisfy:
\begin{enumerate}
 \item \label{x2} $\displaystyle\sum_{j=0}^{q-2}N_{ij}=0$ for all $i$,
 \item \label{x1} $\displaystyle\sum_{i=0}^{q-2}N_{ij}=0$ for all $j$,
 \item \label{x5} $\displaystyle\sum_{i-j=r}N_{ij}=0$ for all $r$,
 \item \label{x4} $\displaystyle N_{i+b_{11},j+b_{21}}=N_{ij}=N_{i+b_{12},j+b_{22}}$ for all $i,j$,
 \item \label{x3} $\displaystyle\sum_{i=0}^{q-2}\sum_{j=0}^{q-2}N_{ij}^2 = 2 \tilde{g} (q-1)^2 q = (q-1)^2q(k-d-e-f+2)$.
\end{enumerate}
\end{theorem}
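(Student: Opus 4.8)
The plan is to count $|\Cij^*|$ by multiplicative characters and read off both the main term and the integers $N_{ij}$ from a single Jacobi-sum expansion. Writing $x=\rho^s$, $y=\rho^t$ and dividing the defining equation by $x^{a_{31}}y^{a_{32}}$ turns the curve into $\rho^{u}+\rho^{v}=1$ where $(u,v)=(i,j)+B(s,t)^{\mathsf T}$ in $\ZZ_{q-1}^2$. Since $|\ker B|=|\coker B|=k$ (Lemma~\ref{orden}), every nonempty fibre of $(s,t)\mapsto B(s,t)^{\mathsf T}$ has size $k$, so, writing each $(A,C)\in(\Fq^*)^2$ uniquely as $(\rho^u,\rho^v)$, one gets $|\Cij^*|=k\cdot\#\{(A,C)\in L:(u,v)\in(i,j)+\im B\}$, where $L=\{(A,C)\in(\Fq^*)^2:A+C=1\}$ has $q-2$ elements. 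Fix $\zeta=e^{2\pi i/(q-1)}$ and the character $\chi$ with $\chi(\rho)=\zeta$. The characters of $\ZZ_{q-1}^2/\im B$ are indexed by $(m_1,m_2)\in\ker B^{\mathsf T}$ via $(u,v)\mapsto\zeta^{m_1u+m_2v}$, so inserting the coset indicator $\frac1k\sum_{(m_1,m_2)\in\ker B^{\mathsf T}}\zeta^{m_1(u-i)+m_2(v-j)}$ and recognising the inner sum as a Jacobi sum gives
\[
 |\Cij^*|=\sum_{(m_1,m_2)\in\ker B^{\mathsf T}}\zeta^{-(m_1i+m_2j)}\,J(\chi^{m_1},\chi^{m_2}),\qquad J(\chi_1,\chi_2)=\sum_{(A,C)\in L}\chi_1(A)\chi_2(C).
\]

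Next I would isolate the degenerate characters. The term $(0,0)$ gives $J(\chi_0,\chi_0)=q-2$. On the axis $m_1=0$ there are $e$ solutions $(0,m_2)\in\ker B^{\mathsf T}$, each contributing $-\zeta^{-m_2j}$ since $J(\chi_0,\chi_2)=-1$; summing this geometric series over the $e$-th roots of unity yields $1-D_e(j)$, and symmetrically the axis $m_2=0$ (with $d$ solutions) yields $1-D_d(i)$. On the anti-diagonal $m_1+m_2\equiv0$ there are $f$ solutions, each contributing $-\chi^{m_1}(-1)\zeta^{-m_1(i-j)}$ because $J(\chi,\chi^{-1})=-\chi(-1)$; using $\chi^{m_1}(-1)=\zeta^{m_1w}$ and $2w\equiv0\bmod f$ this sums to $1-D_f(i-j+w)$. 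Adding the four pieces produces exactly $q+1-D_d(i)-D_e(j)-D_f(i-j+w)$, and I would define $N_{ij}$ as the remaining, nondegenerate part, i.e.\ the sum over $G=\{(m_1,m_2)\in\ker B^{\mathsf T}:m_1\not\equiv0,\ m_2\not\equiv0,\ m_1+m_2\not\equiv0\}$. As $|\Cij^*|$ and the main term are integers, each $N_{ij}$ is an integer.

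The linear identities \eqref{x2}--\eqref{x4} then follow formally. Replacing $(i,j)$ by $(i+b_{11},j+b_{21})$ multiplies the $(m_1,m_2)$-term by $\zeta^{-(m_1b_{11}+m_2b_{21})}=1$ because $(m_1,m_2)\in\ker B^{\mathsf T}$; the same holds for the second column, giving \eqref{x4}. For \eqref{x2} one sums $\zeta^{-m_2j}$ over $j$, which vanishes unless $m_2\equiv0$; since $G$ excludes $m_2\equiv0$ the total is $0$. Identities \eqref{x1} and \eqref{x5} are identical, using the exclusions $m_1\not\equiv0$ and $m_1+m_2\not\equiv0$ (the latter after reindexing the sum over $i-j=r$ by the free variable $j$).

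The quadratic identity \eqref{x3} is the real content. Since each $N_{ij}$ is real, $\sum_{i,j}N_{ij}^2=\sum_{i,j}N_{ij}\overline{N_{ij}}$; expanding both factors and summing the cross terms $\zeta^{-(m_1-m_1')i-(m_2-m_2')j}$ over $(i,j)$ collapses everything by orthogonality to the diagonal $(m_1,m_2)=(m_1',m_2')$, leaving $\sum_{i,j}N_{ij}^2=(q-1)^2\sum_{(m_1,m_2)\in G}|J(\chi^{m_1},\chi^{m_2})|^2$. The classical evaluation $|J(\chi_1,\chi_2)|^2=q$ for $\chi_1,\chi_2,\chi_1\chi_2$ all nontrivial turns this into $(q-1)^2q\,|G|$, so it remains to show $|G|=k-d-e-f+2=2\tilde g$. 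This is an inclusion–exclusion over the three excluded subsets of $\ker B^{\mathsf T}$, whose cardinalities are $e$, $d$, $f$ and whose pairwise and triple intersections all equal $\{(0,0)\}$, giving $|G|=k-d-e-f+2$. I expect the main obstacle to be the bookkeeping of the degenerate terms in the second step—matching $\chi^{m_1}(-1)$ to the shift $w$ and verifying $D_f(i-j-w)=D_f(i-j+w)$—together with pinning down the correct normalisation $|J|^2=q$ of the nondegenerate Jacobi sums, on which the exact constant in \eqref{x3} depends.
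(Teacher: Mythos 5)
Your proof is correct, but it takes a genuinely different route from the paper. The paper's argument is deliberately elementary: items (a)--(c) come from the disjointness of the sets $\Cij^*$ as $j$ (resp.\ $i$, resp.\ $i-j$) varies together with Lemma~\ref{lemaD}; item (d) from the explicit bijection $(x,y)\mapsto(\rho x,y)$; and the quadratic identity (e) from writing $\sum_{i,j}|\Cij^*|^2=\sum_{i,j}|\Cij^*\times\Cij^*|$ and splitting $\Cij^*\times\Cij^*$ via Cramer's rule into the pieces $\Dij$ (nonzero determinant $\Delta$, so $(i,j)$ is determined by the point) and $\Eij$ ($\Delta=0$), each counted by Lemma~\ref{lemaD2}. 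No characters appear anywhere, which the introduction explicitly advertises in contrast with Hua--Vandiver. Your argument instead expands $|\Cij^*|$ over the characters of $\coker(B)$ and packages everything into Jacobi sums; the degenerate characters reproduce the main term $q+1-D_d(i)-D_e(j)-D_f(i-j+w)$ exactly as you say, the linear and translation identities become orthogonality relations, and (e) reduces to $|J(\chi_1,\chi_2)|^2=q$ together with the inclusion--exclusion count $|G|=k-d-e-f+2=2\tilde g$ (all of which check out, including the $\chi^{m_1}(-1)=\zeta^{m_1w}$ bookkeeping and $2w\equiv0\pmod{q-1}$). The trade-off: the paper's route needs no input beyond counting in finite abelian groups, whereas yours imports the classical evaluation $|J|^2=q$ (equivalently $|g(\chi)|^2=q$); in exchange you obtain the explicit formula $N_{ij}=\sum_{(m_1,m_2)\in G}\zeta^{-(m_1i+m_2j)}J(\chi^{m_1},\chi^{m_2})$ with exactly $2\tilde g$ summands, from which Corollary~\ref{maincor} follows at once by the triangle inequality, making the Lagrange-multiplier argument of Proposition~\ref{lagrange} unnecessary for the bound.
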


Using \eqref{x4}, the sum of Theorem~\ref{mainth}\eqref{x3} can be rewritten taking only one representative
of each $(i,j)$ modulo the subgroup $\langle (b_{11},b_{12}), (b_{21},b_{22})\rangle\subseteq \ZZ_{q-1}^2$,
\begin{equation}\label{bound1}
 \sum_{\overline{(i,j)}\in\coker(B)}\!\!\!\!\!\!N_{ij}^2 = 2 \tilde{g} k q = kq(k-d-e-f+2)\leq k^2q.
\end{equation}
We immediately obtain the upper bound $|N_{ij}|\leq k\sqrt{q}$ for all $i,j$. Using a similar approach, but taking advantage
of~\eqref{x2},~\eqref{x1}, and~\eqref{x5}, it is possible to get a stronger upper bound:

\begin{cor}\label{maincor}
$|N_{ij}|\leq 2 \tilde{g} \sqrt{q}$ for all $i,j$.
\end{cor}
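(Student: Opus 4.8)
The plan is to run the entire argument on the additive group $G=\ZZ_{q-1}^2$ via the discrete Fourier transform, so that the five identities of Theorem~\ref{mainth} turn into statements about the support and the $\ell^2$-norm of $\widehat N$. Write $m=q-1$, fix a primitive $m$-th root of unity $\zeta$, and for $(s,t)\in G$ set $\widehat N(s,t)=\sum_{i,j}N_{ij}\,\zeta^{-(si+tj)}$, so that $N_{i_0j_0}=\tfrac1{m^2}\sum_{(s,t)}\widehat N(s,t)\,\zeta^{si_0+tj_0}$ and Parseval reads $\sum_{i,j}N_{ij}^2=\tfrac1{m^2}\sum_{(s,t)}|\widehat N(s,t)|^2$. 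First I would translate each identity. Identity~\eqref{x4} says $N$ is invariant under translation by the columns of $B$, i.e.\ by $\im(B)\subseteq G$; hence $\widehat N$ is supported on the annihilator of $\im(B)$, which under the identification of characters with $G$ is exactly $S:=\ker(B^{\mathsf T})$, a subgroup of order $k=|\coker B|$ by Lemma~\ref{orden}. A direct computation of $\widehat N(s,0)$, $\widehat N(0,t)$, and $\widehat N(u,-u)$, regrouping the defining sums, shows that identities~\eqref{x2},~\eqref{x1},~\eqref{x5} force $\widehat N$ to vanish on the three one-dimensional subgroups $L_1=\{(s,0)\}$, $L_2=\{(0,t)\}$, and $L_3=\{(u,-u)\}$ of $G$, respectively. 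Consequently $\widehat N$ is supported on $T:=S\setminus(L_1\cup L_2\cup L_3)$.

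The heart of the argument is the exact count $|T|=2\tilde g$. Using the elementary fact that the number of $s\in\ZZ_{m}$ with $as\equiv bs\equiv0\pmod m$ equals $\gcd(a,b,m)$, one reads off from $B^{\mathsf T}$ that $|S\cap L_1|=\gcd(b_{11},b_{12},m)=d$, $|S\cap L_2|=\gcd(b_{21},b_{22},m)=e$, and $|S\cap L_3|=\gcd(b_{11}-b_{21},b_{12}-b_{22},m)=f$, matching the definitions in~\eqref{eqdef} verbatim. Since the three lines meet pairwise, and all three together, only at the origin, which lies in $S$, inclusion--exclusion gives $|S\cap(L_1\cup L_2\cup L_3)|=d+e+f-2$, whence $|T|=|S|-(d+e+f-2)=k-d-e-f+2=2\tilde g$.

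Finally I would combine these facts with the quadratic identity~\eqref{x3}. Parseval together with~\eqref{x3} yields $\sum_{(s,t)\in T}|\widehat N(s,t)|^2=m^2\sum_{i,j}N_{ij}^2=2\tilde g\,m^4q$, the sum ranging over $T$ because $\widehat N$ vanishes elsewhere. Applying the triangle inequality and Cauchy--Schwarz to the inversion formula, and using $|\zeta^{si_0+tj_0}|=1$, gives
\[
  |N_{i_0j_0}|\le\frac1{m^2}\sum_{(s,t)\in T}|\widehat N(s,t)|\le\frac1{m^2}\,|T|^{1/2}\Bigl(\sum_{(s,t)\in T}|\widehat N(s,t)|^2\Bigr)^{1/2}=\frac1{m^2}\,(2\tilde g)^{1/2}(2\tilde g\,m^4q)^{1/2}=2\tilde g\,q^{1/2},
\]
which is the claimed bound. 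This is precisely the value one obtains by maximizing a single coordinate $N_{i_0j_0}$ on the sphere~\eqref{x3} intersected with the linear subspace cut out by~\eqref{x2},~\eqref{x1},~\eqref{x5},~\eqref{x4} (the Lagrange-multiplier/optimization viewpoint); the Fourier basis merely diagonalizes that problem.

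The routine part is the Parseval--inversion bookkeeping. The step requiring care, and the real crux, is the support count $|T|=2\tilde g$: in particular, translating the diagonal identity~\eqref{x5} into vanishing on $L_3$, and checking that the three gcd counts reproduce exactly $d$, $e$, $f$ so that the inclusion--exclusion closes. One should also note that $T$ is used only as an upper bound for the support of $\widehat N$, so the Cauchy--Schwarz step remains valid even if some coefficients indexed by $T$ happen to vanish.
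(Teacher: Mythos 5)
Your proof is correct, but it takes a genuinely different route from the paper. The paper proves Corollary~\ref{maincor} by passing to the quotient $G=\coker(B)$ and solving a constrained optimization problem over $\RR$ with Lagrange multipliers (Proposition~\ref{lagrange}), using Lemma~\ref{orden} to identify the orders $n_1=k/e$, $n_2=k/d$, $n_3=k/f$ of the three relevant cyclic subgroups. You instead diagonalize the same problem by a discrete Fourier transform on $\ZZ_{q-1}^2$: identity~\eqref{x4} confines the support of $\widehat N$ to $\ker(B^{\mathsf T})$, of order $k$ (this is where Lemma~\ref{orden}\eqref{orden1} enters, via $|\ker(B^{\mathsf T})|=|\coker(B^{\mathsf T})|=k$); identities~\eqref{x2},~\eqref{x1},~\eqref{x5} kill the three lines, whose intersections with $\ker(B^{\mathsf T})$ have sizes exactly $d$, $e$, $f$ and meet pairwise only at the origin, so inclusion--exclusion gives support size $k-d-e-f+2=2\tilde g$; then Parseval plus Cauchy--Schwarz yields $2\tilde g\sqrt q$. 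All the steps check out (I verified the normalizations, the three gcd counts, and the final arithmetic). Your approach buys several things: it avoids the separate degenerate case in Proposition~\ref{lagrange} (when $1+\tfrac{2}{|G|}=\tfrac1{n_1}+\tfrac1{n_2}+\tfrac1{n_3}$ your argument just says $|T|=0$, hence $N\equiv 0$), it does not need part~\eqref{orden2} of Lemma~\ref{orden}, and it shows $2\tilde g\geq 0$ for free since $2\tilde g$ is a cardinality. What it gives up is precisely the feature the authors advertise in the introduction: their bound is obtained without characters, as a quadratic optimization over $\RR$, whereas your Fourier basis is exactly the character basis of $\ZZ_{q-1}^2$, so the proof is closer in spirit to the Hua--Vandiver character-sum method the paper contrasts itself with. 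Your closing remark is also accurate: the Fourier transform is the change of basis in which the paper's Lagrange problem becomes the trivial maximization of a coordinate sum over a sphere, which is why both methods land on the same constant.
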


\section{Proof of the main results}

\begin{lemma}\label{lemaD} For any $r\geq 1$,
 \[ \sum_{i=0}^{q-2}D_{\ell}(i)^r=\ell^{r-1}(q-1). \]
\end{lemma}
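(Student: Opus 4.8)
The plan is to reduce the sum to a simple counting problem. Since $D_{\ell}(i)$ takes only the two values $\ell$ and $0$, its $r$-th power $D_{\ell}(i)^r$ equals $\ell^r$ exactly when $\ell\mid i$ and vanishes otherwise. Hence the sum collapses to
\[
 \sum_{i=0}^{q-2}D_{\ell}(i)^r = \ell^r\cdot\#\{\,i : 0\leq i\leq q-2,\ \ell\mid i\,\},
\]
and the whole problem becomes counting the multiples of $\ell$ in the range $\{0,1,\dots,q-2\}$.

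The key observation is that $\ell$ divides $q-1$: in every application of this lemma, $\ell$ will be one of the constants $d$, $e$, $f$ from~\eqref{eqdef}, each defined as a gcd that includes $q-1$ among its arguments, so $\ell\mid(q-1)$. Writing $q-1=\ell m$, the multiples of $\ell$ in $\{0,1,\dots,q-2\}$ are precisely $0,\ell,2\ell,\dots,(m-1)\ell$, since the next multiple $m\ell=q-1$ falls just outside the range. This gives exactly $m=(q-1)/\ell$ such indices.

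Substituting back, the sum equals $\ell^r\cdot(q-1)/\ell=\ell^{r-1}(q-1)$, as claimed. There is no real obstacle here beyond the bookkeeping: the only point that requires care is the exact endpoint of the range, where the fact that $q-1$ itself is a multiple of $\ell$ (and is deliberately omitted, since the indices $i$ are regarded modulo $q-1$) is exactly what makes the count come out to the clean value $(q-1)/\ell$ rather than $(q-1)/\ell+1$.
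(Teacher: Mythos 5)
Your proof is correct and follows essentially the same route as the paper: reduce the sum to counting the multiples of $\ell$ in $\{0,\dots,q-2\}$ and use $\ell\mid(q-1)$ to get exactly $(q-1)/\ell$ of them. Your explicit remark that $\ell\mid(q-1)$ because every $\ell$ used in the paper is a gcd involving $q-1$ is a point the paper leaves implicit, but the argument is otherwise identical.
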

\begin{proof}
 By definition of $D_{\ell}$ we have:
 \[ \sum_{i=0}^{q-2}D_{\ell}(i)^r=\sum_{\ell|i}\ell^r=\ell^r\cdot\frac{q-1}{\ell}=\ell^{r-1}(q-1), \]
 since the number of indices $0\leq i<q-1$ that are divisible by $\ell$ is exactly $\frac{q-1}{\ell}$.
\end{proof}

\begin{lemma}\label{lemaD2} For any $a_1,\ldots,a_m\in\ZZ$,
 \[  \left|\left\{(x_1,\ldots,x_m)\in(\Fq^*)^m\,:\,\rho^ix_1^{a_1}\cdots x_m^{a_m}=1\right\}\right| = (q-1)^{m-1}D_{\ell}(i), \]
 where $\ell=\gcd(a_1,\ldots,a_m,q-1)$.
\end{lemma}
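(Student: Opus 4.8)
The plan is to pass from the multiplicative group $\Fq^*$ to additive arithmetic modulo $q-1$ via the discrete logarithm. Since $\rho$ generates the cyclic group $\Fq^*$ of order $q-1$, every $x_j\in\Fq^*$ can be written uniquely as $x_j=\rho^{t_j}$ with $t_j\in\ZZ_{q-1}$, and this substitution is a bijection between $(\Fq^*)^m$ and $\ZZ_{q-1}^m$. Under it the defining equation $\rho^i x_1^{a_1}\cdots x_m^{a_m}=1$ becomes $\rho^{\,i+a_1t_1+\cdots+a_mt_m}=1$, which, because $\rho$ has order $q-1$, is equivalent to the linear congruence
\[
 a_1t_1+\cdots+a_mt_m\equiv -i \pmod{q-1}.
\]
Thus I reduce the problem to counting solutions $(t_1,\dots,t_m)\in\ZZ_{q-1}^m$ of this congruence.

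Next I would study the group homomorphism $L:\ZZ_{q-1}^m\to\ZZ_{q-1}$ given by $L(t_1,\dots,t_m)=a_1t_1+\cdots+a_mt_m$. Its image is the cyclic subgroup generated by $\gcd(a_1,\dots,a_m)$, which as a subgroup of $\ZZ_{q-1}$ equals $\ell\,\ZZ_{q-1}$ with $\ell=\gcd(a_1,\dots,a_m,q-1)$; hence $|\im L|=(q-1)/\ell$. The congruence above is exactly $L(t)=-i$, so it is solvable if and only if $-i$ (equivalently $i$) lies in $\im L$, i.e. if and only if $\ell\mid i$. When it is solvable, its solution set is a coset of $\ker L$, so it has exactly $|\ker L|=|\ZZ_{q-1}^m|/|\im L|=(q-1)^m\big/\big((q-1)/\ell\big)=\ell(q-1)^{m-1}$ elements.

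Finally I would match this with $D_\ell$: if $\ell\mid i$ the count is $\ell(q-1)^{m-1}=(q-1)^{m-1}D_\ell(i)$, while if $\ell\nmid i$ the count is $0=(q-1)^{m-1}D_\ell(i)$; in both cases it equals $(q-1)^{m-1}D_\ell(i)$, as claimed. The only genuinely non-formal point is the determination of $\im L$; I would justify $\im L=\ell\,\ZZ_{q-1}$ either by an explicit B\'ezout computation (choosing integers $c_j$ with $\sum_j c_j a_j=\gcd(a_1,\dots,a_m)$ and reducing this combination modulo $q-1$) or by appealing to the Smith normal form of the $1\times m$ matrix $(a_1,\dots,a_m)$ over $\ZZ$. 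I do not expect any real obstacle, since the argument is elementary, but I would take care that the gcd with $q-1$ enters correctly, as it is precisely the interaction of $\gcd(a_1,\dots,a_m)$ with the modulus $q-1$ that produces the constant $\ell$ in the formula.
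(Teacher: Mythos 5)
Your proof is correct and is essentially the paper's argument transported through the discrete logarithm: the paper works directly with the multiplicative homomorphism $\varphi(x_1,\dots,x_m)=x_1^{a_1}\cdots x_m^{a_m}$ on $(\Fq^*)^m$, notes that $\im(\varphi)=\langle\rho^\ell\rangle$ has index $\ell$, and counts fibers, which is exactly your analysis of the additive map $L$ on $\ZZ_{q-1}^m$. The key points (image generated by the gcd, solvability iff $\ell\mid i$, fiber size $(q-1)^m/|\im|$) are identical, so no further comment is needed.
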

\begin{proof}
 Consider the group homomorphism $\varphi:(\Fq^*)^m\to \Fq^*$ given by $(x_1,\ldots,x_m)\mapsto x_1^{a_1}\cdots x_m^{a_m}$.
 The image of $\varphi$ is generated by $\rho^{a_1},\ldots,\rho^{a_m}$, which is also generated by $\rho^\ell$ since the
 group $\Fq^*$ is cyclic, and in particular $|\im(\varphi)|=\frac{q-1}{\ell}$. When $\rho^{-i}\not\in\langle\rho^\ell\rangle$,
 i.e. $\ell\nmid i$, the left-hand side and the right-hand side of the equation in the statement are both clearly zero.
 Otherwise, when $\ell\,|\,i$, the number of solutions is equal to $|\coker(\varphi)|=(q-1)^m/|\im(\varphi)|=(q-1)^{m-1}\ell=(q-1)^{m-1}D_\ell(i)$.
\end{proof}

\begin{lemma}\label{orden}
We have
\begin{enumerate}
\item \label{orden1} $|\coker(B)|=k$.
\item \label{orden2} The subgroups $\langle\overline{(1,0)}\rangle$, $\langle\overline{(0,1)}\rangle$, $\langle\overline{(1,1)}\rangle$
       of $\coker(B)$ have orders $\frac{k}{e}$, $\frac{k}{d}$, $\frac{k}{f}$, respectively.
\end{enumerate}
\end{lemma}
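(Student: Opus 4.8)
The plan is to turn both parts into index computations for sublattices of $\ZZ^2$. Writing $m=q-1$ and identifying $\ZZ_m^2=\ZZ^2/m\ZZ^2$, the image of $B$ in $\ZZ_m^2$ is $(B\ZZ^2+m\ZZ^2)/m\ZZ^2$, so $\coker(B)\cong \ZZ^2/L$ with $L:=B\ZZ^2+m\ZZ^2$, the lattice generated by the two columns $(b_{11},b_{21})$, $(b_{12},b_{22})$ of $B$ together with $(m,0)$ and $(0,m)$. Every order appearing in the statement then becomes an index of a sublattice of $\ZZ^2$.

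For part~\eqref{orden1} I would compute $[\ZZ^2:L]$ through the Smith normal form $B=U\,\mathrm{diag}(s_1,s_2)\,V$ with $U,V\in GL_2(\ZZ)$, $s_1\mid s_2$, $s_1=\gcd(b_{11},b_{12},b_{21},b_{22})$ and $s_1s_2=|\det(B)|$. As $U$ and $V$ reduce to automorphisms of $\ZZ_m^2$, one gets $\coker(B)\cong\coker(\mathrm{diag}(s_1,s_2))\cong(\ZZ_m/s_1\ZZ_m)\oplus(\ZZ_m/s_2\ZZ_m)$, of order $\gcd(s_1,m)\gcd(s_2,m)$. What remains is the arithmetic identity $\gcd(s_1,m)\gcd(s_2,m)=k$, which I regard as the main (if modest) obstacle. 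First I would note $\gcd(d,e,f)=\gcd(s_1,m)$: indeed $\gcd(d,e)=\gcd(b_{11},b_{12},b_{21},b_{22},m)=\gcd(s_1,m)$, and this already divides $b_{11}-b_{21}$, $b_{12}-b_{22}$ and $m$, hence divides $f$. Writing $g=\gcd(s_1,m)$ and pulling $g$ out of $k=\gcd(mg,s_1s_2)=g\cdot\gcd(m,s_1s_2/g)$, it then suffices to prove $\gcd(m,s_1s_2/g)=\gcd(m,s_2)$; this I would check prime by prime, splitting on $v_\ell(s_1)\le v_\ell(m)$ versus $v_\ell(s_1)>v_\ell(m)$ and using $s_1\mid s_2$, both cases yielding equal $\ell$-adic valuations.

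For part~\eqref{orden2}, the cyclic subgroup generated by $\overline v$ for $v\in\{(1,0),(0,1),(1,1)\}$ is $(\ZZ v+L)/L$, so from the tower $\ZZ^2\supseteq\ZZ v+L\supseteq L$ and part~\eqref{orden1} its order equals $k/[\ZZ^2:\ZZ v+L]$. It thus remains to compute three indices, each by reducing the generators of $L$ modulo $v$. For $v=(1,0)$ the columns of $B$ reduce to $(0,b_{21})$ and $(0,b_{22})$, so $\ZZ v+L=\ZZ(1,0)+\ZZ(0,e)$ with $e=\gcd(b_{21},b_{22},m)$, and the index is $e$; the case $v=(0,1)$ is symmetric and gives $d$. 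For $v=(1,1)$, subtracting multiples of $(1,1)$ turns the generators into $(b_{11}-b_{21},0)$, $(b_{12}-b_{22},0)$ and $\pm(m,0)$, so $\ZZ v+L=\ZZ(1,1)+\ZZ(f,0)$ with $f=\gcd(b_{11}-b_{21},b_{12}-b_{22},m)$ and index $f$. Substituting into $k/[\ZZ^2:\ZZ v+L]$ yields the orders $k/e$, $k/d$ and $k/f$, as asserted.
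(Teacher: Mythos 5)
Your argument is correct, and it reaches both parts of the lemma by a route that shares the paper's basic toolbox (Smith normal form, indices of sublattices of $\ZZ^2$) but differs in execution. You and the paper both read $\coker(B)$ as $\ZZ^2$ modulo the lattice spanned by the \emph{columns} of $B$ together with $(q-1)\ZZ^2$ --- this is what the map $v\mapsto Bv$ prescribes and what the paper's own proof uses, so your setup is the right one. For part~(a) the paper takes a shortcut that spares it your reconciliation step: it forms the $2\times 4$ matrix $L=[B\mid (q-1)I]$ and uses the fact that $|\ZZ^2/\im(L)|$ is the gcd of the $2\times 2$ minors of $L$, which collapses at once to $\gcd(\det B,(q-1)d,(q-1)e)=k$. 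Your version, via the Smith normal form of $B$ alone, lands on $\gcd(s_1,q-1)\gcd(s_2,q-1)$ and then needs the arithmetic identity $\gcd(s_1,q-1)\gcd(s_2,q-1)=\gcd\bigl((q-1)\gcd(s_1,q-1),s_1s_2\bigr)$; you only sketch that step, but the prime-by-prime case split you indicate (comparing $v_\ell(s_1)$ with $v_\ell(q-1)$ and using $s_1\mid s_2$) does close it, so this is extra work rather than a gap. For part~(b) the paper computes the order of $\overline{v}$ as $\min\{r\geq 1: rv\in\im(L)\}$ by adjoining the column $rv$ to $L$ and comparing gcds of minors, arriving at $\min\{r: k=\gcd(k,re)\}=k/e$; your index tower $[\ZZ^2:L]=[\ZZ^2:\ZZ v+L]\cdot[\ZZ v+L:L]$ combined with reducing the generators of $L$ modulo $v$ is cleaner and more transparent, and it correctly yields the three indices $e$, $d$, $f$ (including the $(1,1)$ case, where subtracting multiples of $(1,1)$ pushes everything onto the first axis). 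In short: the paper's augmented-matrix trick buys brevity in (a), your lattice-index argument buys clarity in (b); both are sound.
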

\begin{proof}
 \eqref{orden1} Define the matrix $L=\left[\begin{array}{cccc} b_{11} & b_{12} & q-1 & 0 \\ b_{21} & b_{22} & 0 & q-1\end{array}\right]\in\ZZ^{2\times 4}$,
 which can be regarded as a linear map $L:\ZZ^4\to\ZZ^2$, whose cokernel is 
\[ 
  \coker(B) =\ZZ_{q-1}^2/\langle (b_{11},b_{12}), (b_{21},b_{22})\rangle \cong \ZZ^2/\im(L).
\]
Note that $|\ZZ^2/\im(L)|$ is invariant under elementary row or column operations (on $L$). Therefore, we can substitute $L$ by
its Smith Normal form, and in particular $|\ZZ^2/\im(L)|$ is equal to the greatest common divisor of the determinants of the
$2\times 2$ minors of $L$, i.e.
\[
 |\coker(B)| = |\ZZ^2/\im(L)| = \gcd(\det(B), (q-1)d, (q-1)e) = k.
\]
\eqref{orden2} It is enough to show that $|\langle\overline{(1,0)}\rangle|=k/e$, since the other two are analogous. By definition, the order
of $\overline{(1,0)}$ is
 \[
   \min\{ r\geq 1\,:\, (r,0)\in\im(L)\} = 
   \min\big\{r\geq 1\,:\,|\coker(L)|=|\coker(\left[L|\begin{smallmatrix} r\\ 0\end{smallmatrix}\right])|\big\}.
\]
The greatest common divisor of the determinant of the $2\times 2$ minors of the extended matrix $[L|\begin{smallmatrix} r\\ 0\end{smallmatrix}]$ that do not appear in $L$ is $\gcd(r(q-1), rb_{21}, rb_{22})=re$. Therefore, $|\langle\overline{(1,0)}\rangle|=\min\{r\geq1\,:\, k=\gcd(k,re)\}=k/e$.
\end{proof}

\begin{proof}[Proof of Theorem \ref{mainth}]
 We prove \eqref{x2}, since the proofs of \eqref{x1} and \eqref{x5} are analogous.
 Note that the sets $\Cij^*$ for $j=0,\ldots,q-2$ are disjoint, thus
 \[
   \begin{aligned}
      \sum_{j=0}^{q-2}|\Cij^*| & = \left|\bigcup_{j=0}^{q-2}\Cij^*\right| = |\{(x,y)\in(\Fq^*)^2\,:\, \rho^ix^{a_{11}-a_{31}}y^{a_{12}-a_{32}}\neq 1 \}| \\
      & = (q-1)^2-D_d(i)(q-1).
    \end{aligned}
\]
Therefore,
\[
 \begin{aligned}
 \sum_{j=0}^{q-2}N_{ij} &= \sum_{j=0}^{q-2}\left(|\Cij^*|+D_d(i)+D_e(j)+D_f(i-j+w)-(q+1)\right) \\
    &= (q-1)^2-D_d(i)(q-1) + D_d(i)(q-1) + \sum_{j=0}^{q-2}D_e(j) \\
    & \quad +\sum_{j=0}^{q-2}D_f(i-j+w) - (q+1)(q-1)
  \end{aligned}
\]
which is equal to zero by Lemma~\ref{lemaD}.

To prove~\eqref{x4}, note that the map $\mathcal{C}_{i+b_{11},j+b_{21}}^*\to\Cij^*$ given by
$(x,y)\mapsto(\rho x, y)$ is a bijection, so
$|\mathcal{C}_{i+b_{11},j+b_{21}}^*|=|\Cij^*|$. Moreover, $D_d(i+b_{11})=D_d(i)$, $D_e(j+b_{21})=D_e(j)$,
and $D_f(i-j+b_{11}-b_{21}+w)=D_f(i-j+w)$ since $d\,|\,b_{11}$, $e\,|\,b_{21}$, and $f\,|\,b_{11}-b_{21}$ by
definition. This implies that $N_{i+b_{11},j+b_{21}}=N_{ij}$. The proof of $N_{i+b_{12},j+b_{22}}=N_{ij}$ is analogous.

Now we prove \eqref{x3},
\[
\begin{aligned}
 \sum_{i,j}&|\Cij^*|^2 = \sum_{i,j}\left(N_{ij}-D_d(i)-D_e(j)-D_f(i-j+w)+q+1 \right)^2 = \\
   &= \sum_{i,j}N_{ij}^2 + \sum_{i,j}D_d(i)^2 + \sum_{i,j}D_e(j)^2 + \sum_{i,j}D_f(i-j+w)^2 + (q+1)^2(q-1)^2   \\
   &\qquad -2\sum_{i,j}N_{ij}D_d(i)-2\sum_{i,j}N_{ij}D_e(j)-2\sum_{i,j}N_{ij}D_f(i-j+w)+2(q+1)\sum_{i,j}N_{ij} \\
   &\qquad +2\sum_{i,j}D_d(i)D_e(j)+2\sum_{i,j}D_d(i)D_f(i-j+w)+2\sum_{i,j}D_e(j)D_f(i-j+w)                      \\
   &\qquad -2(q+1)\sum_{i,j}D_d(i)-2(q+1)\sum_{i,j}D_e(j)-2(q+1)\sum_{i,j}D_f(i-j+w).
\end{aligned}
\]
By \eqref{x2}, \eqref{x1}, and \eqref{x5} the sixth, seventh, eighth, and ninth terms vanish. The other
terms can be calculated by Lemma~\ref{lemaD}, thus
\begin{equation}\label{eqxxx}
  \sum_{i,j}|\Cij^*|^2 = \sum_{i,j}N_{ij}^2 + (q-1)^2(q^2-4q+1+d+e+f).
\end{equation}

Note that $|\Cij^*|^2 = |\Cij^*\times\Cij^*|$,
\[
  \Cij^*\times\Cij^* =
    \left\{(x_1,y_1,x_2,y_2)\in(\Fq^*)^4\,:\,\begin{array}{c} \rho^ix_1^{b_{11}}y_1^{b_{12}}+\rho^jx_1^{b_{21}}y_1^{b_{22}}=1 \\
    \rho^ix_2^{b_{11}}y_2^{b_{12}}+\rho^jx_2^{b_{21}}y_2^{b_{22}}=1 \end{array} \right\}.
\]
Define $\Delta=\det\begin{bmatrix}x_1^{b_{11}}y_1^{b_{12}} & x_1^{b_{21}}y_1^{b_{22}} \\ x_2^{b_{11}}y_2^{b_{12}} & x_2^{b_{21}}y_2^{b_{22}}\end{bmatrix}$. The set $\Cij^*\times\Cij^*$ can be written as the disjoint union $\Dij\cup\Eij$, where
$\Dij=(\Cij^*\times\Cij^*)\cap\{(x_1,y_1,x_2,y_2)\in(\Fq^*)^4\,:\,\Delta\neq 0\}$ and $\Eij=(\Cij^*\times\Cij^*)\cap\{(x_1,y_1,x_2,y_2)\in(\Fq^*)^4\,:\,\Delta= 0\}$.
By Cramer's rule,
\[
 \Dij=\left\{ (x_1,y_1,x_2,y_2)\in(\Fq^*)^4\,:\, \Delta\neq 0,\begin{array}{c} \rho^i = (-x_1^{b_{21}}y_1^{b_{22}}+ x_2^{b_{21}}y_2^{b_{22}})/\Delta \\ \rho^j = (x_1^{b_{11}}y_1^{b_{12}}-x_2^{b_{11}}y_2^{b_{12}})/\Delta \end{array}\right\},
\]
which imply that the $\Dij$ are disjoint and their union is
\[
 \bigcup_{i,j}\Dij=\left\{ (x_1,y_1,x_2,y_2)\in(\Fq^*)^4\,:\, \Delta\neq 0, \begin{array}{c}x_1^{b_{21}}y_1^{b_{22}}\neq x_2^{b_{21}}y_2^{b_{22}} \\ x_1^{b_{11}}y_1^{b_{12}}\neq x_2^{b_{11}}y_2^{b_{12}} \end{array}\right\}.
\]
Introducing the change of variables $x=x_1/x_2$ and $y=y_1/y_2$, we get
\[
\begin{aligned}
 &\sum_{ij}|\Dij| =\left|\bigcup_{i,j}\Dij\right|=(q-1)^2\left|\left\{(x,y)\in(\Fq^*)^2\,:\, \begin{array}{c} x^{b_{11}}y^{b_{12}}\neq 1 \\
  x^{b_{21}}y^{b_{22}}\neq 1 \\ x^{b_{11}}y^{b_{12}} \neq x^{b_{21}}y^{b_{22}}
\end{array}\right\}\right| \\
 & = (q-1)^2\bigg( (q-1)^2- \Big|\underbracket{\{x^{b_{11}}y^{b_{12}}=1 \}}_{S_1} \cup \underbracket{\{x^{b_{21}}y^{b_{22}}=1 \}}_{S_2} \cup \underbracket{\{x^{b_{11}-b_{21}}y^{b_{12}-b_{22}}=1 \}}_{S_3} \Big| \bigg).
\end{aligned}
\]
Note that $S_1\cap S_2=S_1\cap S_3=S_2\cap S_3=S_1\cap S_2\cap S_3$, so $|S_1\cup S_2\cup S_3|=|S_1|+|S_2|+|S_3|-2|S_1\cap S_2|$.
By Lemma~\ref{lemaD2}, $|S_1|=(q-1)d$, $|S_2|=(q-1)e$, and $|S_3|=(q-1)f$. Moreover, $|S_1\cap S_2|=|\coker(B)|=k$.
All together, we get
\[
 \sum_{i,j}|\Dij|=(q-1)^2\bigg[(q-1)^2-(q-1)(d+e+f)+2k\bigg].
\]
Observe that
\[
 \Eij=\left\{(x_1,y_1,x_2,y_2)\in(\Fq^*)^4\,:\, \begin{array}{c}
                                            x_1^{b_{11}}y_1^{b_{12}} = x_2^{b_{11}}y_2^{b_{12}} \\
                                            x_1^{b_{21}}y_1^{b_{22}} = x_2^{b_{21}}y_2^{b_{22}} \\
                                            \rho^ix_1^{b_{11}}y_1^{b_{12}}+\rho^jx_1^{b_{21}}y_1^{b_{22}}=1
                                           \end{array} \right\},
\]

\[
\begin{aligned}
 \sum_{i,j}|\Eij|&=\left|\left\{(i,j,x_1,y_1,x_2,y_2)\in\ZZ_{q-1}^2\times(\Fq^*)^4\,:\, \begin{array}{c}
                                             x_1^{b_{11}}y_1^{b_{12}} = x_2^{b_{11}}y_2^{b_{12}} \\
                                            x_1^{b_{21}}y_1^{b_{22}} = x_2^{b_{21}}y_2^{b_{22}} \\
                                            \rho^ix_1^{b_{11}}y_1^{b_{12}}+\rho^jx_1^{b_{21}}y_1^{b_{22}}=1
                                            \end{array} \right\}\right| \\
 &= (q-2)\left|\left\{(x_1,y_1,x_2,y_2)\in(\Fq^*)^4\,:\, \begin{array}{c}
                                             x_1^{b_{11}}y_1^{b_{12}} = x_2^{b_{11}}y_2^{b_{12}} \\
                                            x_1^{b_{21}}y_1^{b_{22}} = x_2^{b_{21}}y_2^{b_{22}}
                                            \end{array} \right\}\right| \\
 &=(q-2)(q-1)^2\left|\left\{(x,y)\in(\Fq^*)^2\,:\, \begin{array}{c}
                                             x^{b_{11}}y^{b_{12}} = 1 \\
                                             x^{b_{21}}y^{b_{22}} = 1
                                            \end{array} \right\}\right| \\
 &= (q-2)(q-1)^2k.
\end{aligned}
\]
Now we have
\[
 \sum_{i,j}|\Cij^*|^2=\sum_{ij}|\Dij|+\sum_{ij}|\Eij|=(q-1)^2\bigg[(q-1)^2-(q-1)(d+e+f)+qk\bigg].
\]
Finally, using Eq.~\eqref{eqxxx}, we get $\sum_{i,j}N_{ij}^2=(q-1)^2q(k-d-e-f+2)$.
\end{proof}

\begin{prop}\label{lagrange}
 Let $G$ be an abelian group and let $g_1,g_2,g_3\in G$ such that $G=\langle g_1,g_2\rangle =
 \langle g_1,g_3\rangle =\langle g_2,g_3\rangle$. Let $K\geq 0$ and let $N:G\to\RR$ be a function
 $a \mapsto N_a:= N(a)$ such that
 \begin{enumerate}
  \item \label{prop1} $\displaystyle\sum_{a\in g+\langle g_1\rangle}N_a = 0$ for all $g\in G$,
  \item \label{prop2} $\displaystyle\sum_{a\in g+\langle g_2\rangle}N_a = 0$ for all $g\in G$,
  \item \label{prop4} $\displaystyle\sum_{a\in g+\langle g_3\rangle}N_a = 0$ for all $g\in G$,
  \item \label{prop3} $\displaystyle\sum_{a\in G}N_a^2=K$.
 \end{enumerate}
 Then
 \[
  |N_g| \leq \sqrt{K\left(1+\frac{2}{|G|}-\frac{1}{n_1}-\frac{1}{n_2}-\frac{1}{n_3}\right)},
 \]
 for all $g\in G$, where $n_1$, $n_2$, and $n_3$ are the orders of the elements $g_1$, $g_2$,
 and $g_3$, respectively.
\end{prop}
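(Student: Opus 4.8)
The plan is to read the statement as a constrained optimization problem over the Euclidean space $\RR^G$ and solve it geometrically. First I would observe that all four hypotheses are invariant under translating the index: replacing $N_a$ by $N_{a+h}$ preserves \eqref{prop1}--\eqref{prop3} and merely permutes the cosets of each $\langle g_i\rangle$. Hence $\max_N N_g$ is independent of $g$, and it suffices to bound $N_0$, the value at the identity $0\in G$. Writing $e_0$ for the indicator vector of $\{0\}$, the three families of coset-sum conditions \eqref{prop1}--\eqref{prop4} cut out a linear subspace $V\subseteq\RR^G$, while condition \eqref{prop3} restricts $N$ to the sphere of radius $\sqrt{K}$. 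Since $N_0=\langle e_0,N\rangle=\langle P_V e_0,N\rangle$ for $N\in V$, where $P_V$ is the orthogonal projection onto $V$, Cauchy--Schwarz yields $\max_{N\in V,\,\|N\|^2=K}N_0=\sqrt{K}\,\|P_V e_0\|$, attained along $P_V e_0$. The entire problem thus reduces to computing $\|P_V e_0\|^2=1-\|P_{V^\perp}e_0\|^2$.

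Now $V^\perp=W_1+W_2+W_3$, where $W_i$ is spanned by the indicator vectors of the cosets of $H_i:=\langle g_i\rangle$. Let $P_i$ be the orthogonal projection onto $W_i$; concretely $P_i$ is the averaging operator $(P_i f)(a)=\frac{1}{n_i}\sum_{h\in H_i}f(a+h)$, and let $J$ be the averaging operator over all of $G$. These averaging operators are convolutions by subgroups, so they commute, and $P_iP_j$ is averaging over $H_i+H_j$. The hypothesis $G=\langle g_i,g_j\rangle$ means precisely $H_i+H_j=G$ for $i\neq j$, whence $P_iP_j=J$ and $P_1P_2P_3=J$.

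With commuting projections in hand, the projection onto the sum $W_1+W_2+W_3$ is $P=I-(I-P_1)(I-P_2)(I-P_3)$; expanding and using $P_iP_j=P_1P_2P_3=J$ collapses this to $P=P_1+P_2+P_3-2J$. Since $P$ is a self-adjoint idempotent, $\|Pe_0\|^2=\langle Pe_0,e_0\rangle$, and each term is read off directly from the averaging formulas: $\langle P_i e_0,e_0\rangle=(P_i e_0)(0)=1/n_i$ and $\langle Je_0,e_0\rangle=1/|G|$. This gives $\|P_{V^\perp}e_0\|^2=\frac{1}{n_1}+\frac{1}{n_2}+\frac{1}{n_3}-\frac{2}{|G|}$, hence $\|P_V e_0\|^2=1+\frac{2}{|G|}-\frac{1}{n_1}-\frac{1}{n_2}-\frac{1}{n_3}$, which is exactly the bound for $N_g$; applying the same reasoning to $-N$ controls $|N_g|$.

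The step I expect to be the crux is the collapse $P=P_1+P_2+P_3-2J$: it is precisely here that the three-fold overlap of the constraint spaces is resolved, and it is exactly the pairwise generation hypothesis $G=\langle g_i,g_j\rangle$ that forces every pairwise and triple product of the $P_i$ to degenerate to the global average $J$. Verifying that the $P_i$ genuinely commute as operators on $\RR^G$, rather than merely having one-dimensional pairwise intersections of ranges, is the technical point that makes the identity $I-(I-P_1)(I-P_2)(I-P_3)$ applicable; without commutativity the projection onto the sum would not factor in this way.
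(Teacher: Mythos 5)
Your argument is correct, and it takes a genuinely different route from the paper. The paper sets up the constrained optimization with Lagrange multipliers, solves the resulting linear system for the multipliers to obtain $2\gamma N_a = -\delta_{a,0} - \tfrac{2}{|G|} + \sum_i \chi_{\langle g_i\rangle}(a)/n_i$, and must treat separately the degenerate case $1+\tfrac{2}{|G|}=\tfrac{1}{n_1}+\tfrac{1}{n_2}+\tfrac{1}{n_3}$ (where $\gamma$ could vanish), which it resolves by an ad hoc group-theoretic analysis forcing $n_1=n_2=n_3=2$ and $G\cong\ZZ_2\oplus\ZZ_2$. Your projection argument computes the same extremal direction --- the right-hand side of the paper's critical-point equation is exactly $-(P_Ve_0)(a)$ --- but packages it structurally: the key identities $P_iP_j=J$ (for $i\neq j$, from $H_i+H_j=G$) and the inclusion--exclusion formula $I-(I-P_1)(I-P_2)(I-P_3)$ for commuting orthogonal projections replace the multiplier bookkeeping, Cauchy--Schwarz replaces the existence/classification of critical points, and the degenerate case is absorbed uniformly as $\|P_Ve_0\|=0$. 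You also get for free that the constant $1+\tfrac{2}{|G|}-\sum_i\tfrac{1}{n_i}$ is nonnegative and that the bound is sharp over $\RR$ (attained at a multiple of $P_Ve_0$), neither of which is explicit in the paper. All the individual steps check out: $V^\perp=W_1+W_2+W_3$ is the correct dual description of the coset-sum constraints, the $P_i$ are convolution operators on the group algebra and hence commute, the count $|H_i\cap H_j|=n_in_j/|G|$ gives $P_iP_j=J$ exactly, and the translation argument legitimately reduces $|N_g|$ to $N_0$.
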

\begin{proof}
Let $n_{12}=|\langle g_1\rangle \cap \langle g_2\rangle|$. The isomorphism
\[
  G/\langle g_1\rangle\cong \langle g_2\rangle / \langle g_1\rangle \cap \langle g_2\rangle
\]
implies that $n_{12}=\frac{n_1n_2}{|G|}$. Similarly, we define $n_{13}$ and $n_{23}$.

We study first the case when $1+\frac2{|G|}=\frac1{n_1}+\frac1{n_2}+\frac1{n_3}$.
Assuming without loss of generality that $n_1 \leq n_2 \leq n_3$, the previous equality implies
that $n_1<3$. The case $n_1=1$ can only happen when $g_1$ is the neutral element of $G$, and then~\eqref{prop1}
reduces to $N_g=0$ for all $g\in G$. In the case $n_1=2$, we have $|G|=|\langle g_1,g_2\rangle|\leq 2n_2$,
hence $\frac{1}{2}+\frac{1}{n_2}+\frac{1}{n_3}=1+\frac{2}{|G|}\geq 1+\frac{1}{n_2}$, and in particular $n_3\leq 2$.
Therefore $n_1=n_2=n_3=2$ and $G$ is a group of order $|G|=4$. The elements $g_1,g_2,g_3$ are pairwise
distinct, since each pair of them generates the group, so $G = \{ 0, g_1, g_2, g_3 \} \simeq \ZZ_2\oplus\ZZ_2$. Items~\eqref{prop1},~\eqref{prop2},~\eqref{prop4} yield the following identities:
\[
\begin{aligned}
 N_{0}+N_{g_1}=&N_{g_2}+N_{g_3} = 0 \\
 N_{0}+N_{g_2}=&N_{g_1}+N_{g_3} = 0 \\
 N_{0}+N_{g_3}=&N_{g_1}+N_{g_2} = 0,
\end{aligned}
\]
which imply $N_g=0$ for all $g\in G$, and the claim follows.

Now we assume that $1+\frac2{|G|}\neq\frac1{n_1}+\frac1{n_2}+\frac1{n_3}$.
 We use Lagrange multipliers to get the desired upper bound for $N_g$. By the symmetry of the problem, we can restrict
 to the case $N_0$. Define the auxiliary function
 \[
 \begin{aligned}
  F = N_0 &+ \sum_{\bar{g}\in G/\langle g_1\rangle}\lambda_{\bar g}\left(\sum_{a\in g+\langle g_1\rangle}N_a\right)+
    \sum_{\bar{g}\in G/\langle g_2\rangle}\mu_{\bar g}\left(\sum_{a\in g+\langle g_2\rangle}N_a\right) \\
    & +\sum_{\bar{g}\in G/\langle g_3\rangle}\varepsilon_{\bar g}\left(\sum_{a\in g+\langle g_3\rangle}N_a\right)+
    \gamma\left(-K+\sum_{a\in G}N_a^2\right)
 \end{aligned}
 \]
with $N_a$, $\lambda_{\bar{g}}$, $\mu_{\bar{g}}$, $\varepsilon_{\bar{g}}$, and $\gamma$ as independent variables.
The critical points of $F$ correspond with the local extrema of $N_0$ subject to the restrictions stated in the theorem.
Now, we calculate the partial derivatives of $F$ with respect to each variable. With respect to $\lambda_{\bar{g}}$, $\mu_{\bar{g}}$,
$\varepsilon_{\bar{g}}$, and $\gamma$, we get the assumptions of the proposition. With respect to $N_a$, we get
\begin{equation}\label{eqyyy}
 \frac{\partial F}{\partial N_a} = \delta_{a,0} + \lambda_{\bar{a}} + \mu_{\bar{a}} + \varepsilon_{\bar{a}} + 2\gamma N_a = 0
\end{equation}
for all $a\in G$, where $\delta_{a,0}$ stands for the Kronecker delta.

For any element $g\in G$, we have
\[
 \begin{aligned}
 \sum_{a\in g+\langle g_1\rangle}(\delta_{a,0} + \lambda_{\bar{a}} &+ \mu_{\bar{a}} + \varepsilon_{\bar{a}} + 2\gamma N_a ) = \chi_{\langle g_1\rangle}(g)+
 n_1\lambda_{\bar{g}} + \sum_{a\in g+\langle g_1\rangle}\mu_{\bar{a}} + \sum_{a\in g+\langle g_1\rangle}\varepsilon_{\bar{a}} \\
 &= \chi_{\langle g_1\rangle}(g)+
 n_1\lambda_{\bar{g}} + n_{12}\sum_{\bar{a}\in G/\langle g_2\rangle}\mu_{\bar{a}}
 + n_{13}\sum_{\bar{a}\in G/\langle g_3\rangle}\varepsilon_{\bar{a}} =0.
 \end{aligned}
\]
Define $\lambda=-\frac{n_{12}}{n_1}\sum_{\bar{a}\in G/\langle g_2\rangle}\mu_{\bar{a}}-\frac{n_{13}}{n_1}\sum_{\bar{a}\in G/\langle g_3\rangle}\varepsilon_{\bar{a}}$. The previous identity shows that
$\lambda_{\bar{0}}=\lambda-\frac{1}{n_1}$ and $\lambda_{\bar{g}}=\lambda$ for all $\bar{g}\neq\bar{0}$. Similarly, we define
$\mu=-\frac{n_{12}}{n_2}\sum_{\bar{a}\in G/\langle g_1\rangle}\lambda_{\bar{a}}-\frac{n_{23}}{n_2}\sum_{\bar{a}\in G/\langle g_3\rangle}\varepsilon_{\bar{a}}$, and then $\mu_{\bar{0}}=\mu-\frac{1}{n_2}$ and $\mu_{\bar{g}}=\mu$ for all $\bar{g}\neq\bar{0}$.
Analogously, we define $\varepsilon=-\frac{n_{13}}{n_3}\sum_{\bar{a}\in G/\langle g_1\rangle}\lambda_{\bar{a}}-\frac{n_{23}}{n_3}\sum_{\bar{a}\in G/\langle g_2\rangle}\mu_{\bar{a}}$, and then $\varepsilon_{\bar{0}}=\varepsilon-\frac{1}{n_3}$ and $\varepsilon_{\bar{g}}=\varepsilon$ for all $\bar{g}\neq\bar{0}$. By construction of $\varepsilon$, we have
\[
  \begin{aligned}
  \varepsilon &= -\frac{n_{13}}{n_3}\sum_{\bar{a}\in G/\langle g_1\rangle}\lambda_{\bar{a}}-\frac{n_{23}}{n_3}\sum_{\bar{a}\in G/\langle    g_2\rangle}\mu_{\bar{a}} \\
  &= -\frac{n_{13}}{n_3}\left(\frac{|G|}{n_1}\lambda-\frac1{n_1}\right)-\frac{n_{23}}{n_3}\left(\frac{|G|}{n_2}\mu-\frac1{n_2}\right).
  \end{aligned}
\]
Therefore $\lambda+\mu+\varepsilon=\frac{2}{|G|}$, and Equation~\eqref{eqyyy} can be rewritten as follows:
\begin{equation}\label{eqzzz}
 2\gamma N_a = -\delta_{a,0} - \frac{2}{|G|} + \frac{\chi_{\langle g_1\rangle}(a)}{n_1} + \frac{\chi_{\langle g_2\rangle}(a)}{n_2}
 + \frac{\chi_{\langle g_3\rangle}(a)}{n_3}.
\end{equation}
Squaring the previous equation and summing over all $a\in G$, we get
\[
 4\gamma^2\sum_{a\in G}N_a^2 = 1+\frac2{|G|}-\frac1{n_1}-\frac1{n_2}-\frac1{n_3}\neq 0.
\]
This allows us to get $\gamma\neq 0$, and together with Equation~\eqref{eqzzz} for $a=0$, concludes the proof.
\end{proof}

\begin{proof}[Proof of Corollary~\ref{maincor}]
 Consider $G=\ZZ_{q-1}^2/\langle (b_{11},b_{12}), (b_{21},b_{22})\rangle=\coker(B)$. By Theorem~\ref{mainth}\eqref{x4}, the map
 $N:G\to\RR$ such that $\overline{(i,j)}\mapsto N_{ij}$ is well defined. Let $g_1=\overline{(1,0)}\in G$, $g_2=\overline{(0,1)}\in G$,
 and $g_3=\overline{(1,1)}\in G$. With this notation, the hypotheses of Proposition~\ref{lagrange} with $K=kq(k-d-e-f+2)$ follow from
 Theorem~\ref{mainth} and Eq.~\eqref{bound1}.
 By Lemma~\ref{orden}, we have $n_1=k/e$, $n_2=k/d$, and $n_3=k/f$. The only thing left to do is to substitute these values in
 Proposition~\ref{lagrange} and a suitable rearrangement of the terms.
\end{proof}

\section{Gauss' theorem}\label{gauss}

We devote this section entirely to showing how to derive Thm.~\ref{thm_gauss} as a consequence of Thm.~\ref{mainth}. We consider
the family of curves
\[
 \Cij = \{ (x,y)\in\Fp^2\,:\,\rho^ix^3+\rho^jy^3=1 \}
\]
for $0\leq i,j<p-1$.
Removing the extra points on the lines $x=0$, $y=0$, $z=0$, Gauss' curve corresponds to $\{ (x,y)\in(\Fp^*)^2\,:\,x^3+y^3=-1 \}$
that has the same number of points as $\mathcal{C}^*_{00}$. The number of points on each of those lines is equal to the number
of cubic roots of the unity in $\Fp$, which is equal to $\gcd(3,p-1)$ by Lemma~\ref{lemaD2}. Therefore, $M_p=|\mathcal{C}_{00}^*|+3\gcd(3,p-1)$. By Thm.~\ref{mainth}, we get
\[
 M_p=p+1-D_d(0)-D_e(0)-D_f(w)+N_{00}+3\gcd(3,p-1), 
\]
where $d=e=f=\gcd(3,p-1)$ and $w=\frac{p-1}{2}$. Then $M_p=p+1+N_{00}$.

\bigskip

Case $p\not\equiv 1\pmod{3}$: Here we have $d=e=f=1$, $k=\gcd(p-1, 9)=1$, then $\tilde{g}=0$ and $N_{ij}=0$ for all $0\leq i,j<p-1$
by Thm.~\ref{mainth}\eqref{x3}. In particular $N_{00}=0$ and $M_p=p+1$, as expected.

\bigskip

Case $p\equiv 1\pmod{3}$: Here we have $d=e=f=3$, $k=\gcd(3(p-1),9)=9$ and $\tilde{g}=\frac{1}{2}(k-d-e-f+2)=1$. The cokernel of
the matrix $B=\begin{bmatrix} 3 & 0 \\ 0 & 3\end{bmatrix}$ is $\ZZ_3\oplus\ZZ_3$, so the numbers $N_{ij}$ reduce to only
nine possibilities
\[
A=
 \begin{bmatrix}
  N_{00} & N_{01} & N_{02}\\
  N_{10} & N_{11} & N_{12}\\
  N_{20} & N_{21} & N_{22}
 \end{bmatrix}
\]
depending on the class of $(i,j)$ in $\coker(B)$ by Thm.~\ref{mainth}\eqref{x4}. Due to Thm.~\ref{mainth}\eqref{x2}\eqref{x1}\eqref{x5},
each of the rows, columns and diagonals of the matrix $A$ above adds up to zero. This proves that
\[
A=
 \begin{bmatrix}
  u & v & -u-v\\
  v & -u-v & u\\
  -u-v & u & v
 \end{bmatrix}
\]
for some $u,v\in\ZZ$. Moreover, by Eq.~\eqref{bound1}, the sum of the squares of the entries of $A$ is
$3(u^2+v^2+(u+v)^2)=18p$, so
\begin{equation}\label{eq_uv}
u^2+v^2+uv=3p. 
\end{equation}
Let $\xi_3\in \Fp$ be a cubic root of the unity. Note that $|\mathcal{C}^*_{ij}|$ is divisible by $9$, since for each point $(x,y)\in\mathcal{C}^*_{ij}$, its conjugates $(\xi_3^rx,\xi_3^sy)$ are also in $\mathcal{C}^*_{ij}$ for any $0\leq r,s<3$.
By Eq.~\eqref{eq:defCij},
$$
\begin{aligned}
u &= N_{00} = |\mathcal{C}_{00}^{*}| - (p+1) + D_3(0) + D_3(0) + D_3(w) =  |\mathcal{C}_{00}^{*}| - p + 8, \\
v &= N_{01} = |\mathcal{C}_{01}^{*}| - (p+1) + D_3(0) + D_3(1) + D_3(w-1) = |\mathcal{C}_{01}^{*}| - p + 2.
\end{aligned}
$$
Therefore $2v + v = 2 |\mathcal{C}_{01}^{*}| + |\mathcal{C}_{00}^{*}| - 3(p-4)$ is divisible by $9$.
Denoting $\bar{v}=\frac{2v+u}{9}\in\ZZ$, Eq.~\eqref{eq_uv} becomes $u^2+27\bar{v}^2=4p$.
Moreover, $u = |\mathcal{C}_{00}^{*}| - p + 8 \equiv 1 \pmod{3}$.
The uniqueness of the solution of $u^2+27\bar{v}^2=4p$ with $u\equiv1\pmod{3}$ follows from the fact that $\ZZ\left[\frac{-1+\sqrt{-3}}{2}\right]$ is a UFD.

\section{Genus of a trinomial curve}\label{sec4}

The aim of this section is to calculate the genus of the projective closure $\overline{\Cij}$ of the curve
$\Cij$ in $\mathbb{P}^2(\overline{\Fq})$ in the irreducible case. In order to do so, we use the standard
formula that relates the genus of a curve with the delta invariant $\delta_P$ at each of its
singularities, see formula~\eqref{www} below. The delta invariants are computed using the techniques shown
in~\cite[Ch.~3 and 6]{Casas}. The final formula obtained in Prop.~\ref{prop-genus} should be compared
term by term to the definition of $\tilde{g}$ given in Eq.~\eqref{eqdef}.

\begin{lemma}\label{deltax}
Let $\mathcal{C}\subseteq \PP^2(\overline{\Fq})$ be a curve such that its local equation at $P$ is given by
$\alpha x^r+\beta y^s+\gamma x^uy^v=0$ with $\alpha\beta\neq 0$ and $r,s\geq 1$. If either $\gamma=0$ or $(u,v)$ is above the
segment that joins $(r,0)$ and $(0,s)$, then
  \[  \delta_{P}=\frac12\left(rs-r-s+\gcd(r,s)\right).  \]
If $\gamma\neq 0$ and $(u,v)$ is below the segment, then
  \[  \delta_{P}=\frac12\left(rv+su-r-s+\gcd(u,s-v)+\gcd(v,r-u)\right). \]
The formula of the first case is valid even if $r=0$ or $s=0$. Also, the formula of the second
case is valid when either $r=u=0$ or $s=v=0$. In both situations, the point $P$ does not belong
to the curve and $\delta_P=0$.
\end{lemma}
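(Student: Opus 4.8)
The plan is to compute $\delta_P=\dim_{\overline{\Fq}}(\widetilde{\mathcal{O}}/\mathcal{O})$ directly from the Newton polygon of the local equation $f=\alpha x^r+\beta y^s+\gamma x^u y^v$, following the techniques of \cite[Ch.~3 and~6]{Casas}. The key tool is Milnor's formula $2\delta_P=\mu_P+b-1$, where $\mu_P$ is the Milnor number and $b$ the number of local branches, combined with Kouchnirenko's expression for $\mu_P$ as $2A-r-s+1$ with $A$ the area cut off between the Newton polygon and the axes; equivalently, one resolves $f$ by the sequence of toric blow-ups dictated by the polygon and sums $\binom{m_Q}{2}$ over the infinitely near points $Q$, which is the characteristic-free version. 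Either way the computation splits according to the shape of the Newton polygon of $f$, which is exactly the dichotomy in the statement.

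In the first case ($\gamma=0$, or $(u,v)$ strictly above the segment joining $(r,0)$ and $(0,s)$) the Newton polygon is the single segment from $(r,0)$ to $(0,s)$, and $\gamma x^u y^v$ lies above it, so it does not alter the polygon. I would argue that $f$ is then equisingular to $\alpha x^r+\beta y^s$, which after rescaling is $x^r+y^s$. For this germ one has $\mu=(r-1)(s-1)$ (the triangle of legs $r,s$ has area $rs/2$) and $b=\gcd(r,s)$ branches, so Milnor's formula yields $\delta_P=\tfrac12(rs-r-s+\gcd(r,s))$. When $r=0$ or $s=0$ the constant term of $f$ is nonzero, so $P\notin\mathcal{C}$ and $\delta_P=0$; one checks that the formula returns $0$ as well.

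In the second case ($\gamma\neq 0$ and $(u,v)$ below the segment) the Newton polygon has the two compact edges $(r,0)$--$(u,v)$ and $(u,v)$--$(0,s)$. A shoelace computation on the quadrilateral $(0,0),(r,0),(u,v),(0,s)$ gives area $\tfrac12(rv+su)$, hence $\mu_P=rv+su-r-s+1$. To count branches I factor each edge polynomial: $\alpha x^r+\gamma x^u y^v=x^u(\alpha x^{r-u}+\gamma y^v)$ contributes $\gcd(r-u,v)$ branches, and $\gamma x^u y^v+\beta y^s=y^v(\gamma x^u+\beta y^{s-v})$ contributes $\gcd(u,s-v)$ branches, so $b=\gcd(v,r-u)+\gcd(u,s-v)$. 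Milnor's formula then produces the second formula of the statement. The degenerate sub-cases $r=u=0$ and $s=v=0$ again force a nonzero constant in $f$, so $P\notin\mathcal{C}$ and $\delta_P=0$, in agreement with the formula.

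The main obstacle is positive characteristic. Both Kouchnirenko's formula and the branch count rest on non-degeneracy of the edge data: each binomial factor $\alpha x^a+\gamma y^b$ must be reduced and split into exactly $\gcd(a,b)$ distinct branches, and the face polynomials must have no critical points in the torus. This can fail when $p$ divides the pertinent exponents---for instance $x^p+y^p=(x+y)^p$ over $\overline{\Fq}$ is a non-reduced line with $\delta=0$, not the value the formula predicts. I would therefore carry out the $\delta$-computation through the explicit blow-up tree of \cite{Casas}, where the multiplicity sequence is governed by the Euclidean algorithm on the edge data and is manifestly characteristic-free, and restrict to the reduced (irreducible) situation in which this section operates, thereby sidestepping the inseparable cases.
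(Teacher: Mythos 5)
Your computation lands on the correct formulas in both cases, and your first case coincides with the paper's: discard the monomial above the Newton polygon, reduce to $x^r+y^s$, and apply $2\delta_P=\mu_P+b-1$ with $\mu_P=(r-1)(s-1)$ and $b=\gcd(r,s)$. In the second case, however, you take a genuinely different route. You compute $\mu_P$ from the area under the two-edged Newton polygon via Kouchnirenko's formula and count branches edge by edge, then invoke the Milnor--Jung relation once more. The paper instead adds the monomial $\frac{\alpha\beta}{\gamma}x^{r-u}y^{s-v}$ (which lies above the polygon, so it changes nothing), observes that the completed equation factors as $\bigl(\alpha x^u+\frac{\alpha\beta}{\gamma}y^{s-v}\bigr)\bigl(x^{r-u}+\frac{\gamma}{\alpha}y^v\bigr)$, and then uses additivity of $\delta$ for a product, $\delta_P(fg)=\delta_P(f)+\delta_P(g)+i_P(f,g)$, with each factor handled by the binomial case and $i_P=uv$ supplied by Noether's formula. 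The trade-off: the paper's factorization reduces everything to the single binomial computation plus two standard local-intersection facts, so it needs neither Kouchnirenko's theorem nor a global Newton-nondegeneracy hypothesis; your route is more conceptual and generalizes to polygons with more edges, but it imports two theorems (Kouchnirenko and the branch count from edge polynomials) whose positive-characteristic validity requires exactly the tameness caveats you flag at the end. Your closing remark about wild cases such as $x^p+y^p=(x+y)^p$ is well taken --- note that the paper's own first-case argument via $2\delta_P=\mu_P+b-1$ is exposed to the same issue, so your proposed retreat to the explicit blow-up tree, where the multiplicity sequence is driven by the Euclidean algorithm on the edge data, is the honest way to make either argument characteristic-free.
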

\begin{proof}
 In the first case, the term $\gamma x^uy^v$ can be removed from the local equation without changing the topology (since the point
 is above the Newton polygon). It is clear that the Milnor number at $P$ is $\mu_P=(r-1)(s-1)$ and that the number of local branches at $P$
 is $r_P=\gcd(r,s)$. Therefore $2\delta_P=\mu_P+r_P-1=rs-r-s+\gcd(r,s)$.
 
 In the other case, the local equation can be changed by $\alpha x^r + \beta y^s +\gamma x^uy^v + \frac{\alpha\beta}{\gamma}x^{r-u}y^{s-v}$,
 since the extra term is above the Newton polygon. Doing so, we get an expression that factorizes as $(\alpha x^u + \frac{\alpha\beta}{\gamma}y^{s-v})(x^{r-u}+\frac\gamma\alpha y^v)$. Applying the formula of the $\delta$-invariant of a product, we get:
\[
\delta_P(\alpha x^r+\beta y^s+\gamma x^uy^v) = \delta_P(\alpha x^u + \frac{\alpha\beta}{\gamma}y^{s-v}) + \delta_P(x^{r-u}+\frac\gamma\alpha y^v) + i_P(\alpha x^u + \frac{\alpha\beta}{\gamma}y^{s-v},x^{r-u}+\frac\gamma\alpha y^v),
\]
 where $i_P$ denotes the intersection multiplicity at $P$. The values of $\delta_P$ of each factor can be computed as in the first case.
 Using Noether's formula (see~\cite[p.~3568]{CMO}), the intersection multiplicity is $uv$. We conclude by simply adding these three values.
\end{proof}

\begin{prop}\label{prop-genus}
 If the projective closure $\overline{\Cij}$ of the curve $\Cij$ is irreducible in $\PP^2(\overline{\Fq})$, the genus of $\overline{\Cij}$ is
 \[
  g(\overline{\Cij}) = \frac12\left(|\det(B)|-\gcd(b_{11}, b_{12})-\gcd(b_{21},b_{22})-\gcd(b_{11}-b_{21},b_{12}-b_{22})+2 \right).
 \]
\end{prop}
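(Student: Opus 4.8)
The plan is to apply the genus--delta formula~\eqref{www}, $g(\overline{\Cij})=\binom{D-1}{2}-\sum_P\delta_P$, where $D$ is the degree of the projective closure and the sum runs over the singular points. First I would set $D=\max\{a_{11}+a_{12},\,a_{21}+a_{22},\,a_{31}+a_{32}\}$ and homogenize $f=\rho^ix^{a_{11}}y^{a_{12}}+\rho^jx^{a_{21}}y^{a_{22}}-x^{a_{31}}y^{a_{32}}$ to $F(x,y,z)$ by giving the $m$-th term the factor $z^{D-a_{m1}-a_{m2}}$. Since the asserted formula depends only on the entries of $B$, i.e.\ on the \emph{differences} of the exponent vectors, it is invariant under translating the Newton triangle $T=\mathrm{conv}\{(a_{11},a_{12}),(a_{21},a_{22}),(a_{31},a_{32})\}$; equivalently, under multiplying $f$ by a monomial, which changes neither the curve inside the torus nor its projective closure. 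I would use this freedom to normalize so that each of $x,y,z$ is absent from at least one term of $F$ (so $F$ has no monomial factor), which geometrically means $T$ touches all three sides of $\Delta_D=\mathrm{conv}\{(0,0),(D,0),(0,D)\}$.

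The geometric heart is that, after this normalization, $\{F=0\}$ is smooth on the torus and its only singular points are the three coordinate points $e_x=[1:0:0]$, $e_y=[0:1:0]$, $e_z=[0:0:1]$. In the torus this is the nondegeneracy of the trinomial: a torus singularity would force the vector of the three monomial values to lie in the kernel of the matrix with rows $(1,1,1)$, $(a_{11},a_{21},a_{31})$, $(a_{12},a_{22},a_{32})$, whose determinant equals $\det(B)$ and is therefore nonzero when $p\nmid\det(B)$; along each coordinate line the restricted equation is a monomial or a binomial, which has no singular point in the open part of the line. I expect this verification to be the main obstacle, in particular controlling the characteristic-$p$ case $p\mid\det(B)$ (where torus or boundary singularities could appear) and the degenerate configurations in which one vertex of $T$ lies on two sides of $\Delta_D$; the latter should be absorbed by the limiting cases ($r=0$ or $s=0$, with $\delta_P=0$ when $P$ is off the curve) already built into Lemma~\ref{deltax}.

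With the singular locus pinned down, I would compute each $\delta_{e_\bullet}$ from Lemma~\ref{deltax}. At each coordinate point the local equation is again a trinomial: the two vertices of $T$ lying on the two sides of $\Delta_D$ through that corner become the pure powers $x^r,y^s$, while the opposite vertex is the mixed term and lies \emph{above} the edge of $T$ facing the corner, so the first case of Lemma~\ref{deltax} applies. Writing the three normalized vertices as $(r,0)$, $(0,s)$, $(p,q)$ with $p+q=D$, one obtains
\[
\delta_{e_z}=\tfrac12\bigl(rs-r-s+\gcd(r,s)\bigr),
\]
and the analogous expressions for $\delta_{e_x}$ and $\delta_{e_y}$, with the pairs $(q,D-r)$ and $(p,D-s)$ in place of $(r,s)$. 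Here the three resulting $\gcd$'s are precisely the lattice lengths of the three edges of $T$, so together they are $\gcd(b_{11},b_{12})$, $\gcd(b_{21},b_{22})$, and $\gcd(b_{11}-b_{21},b_{12}-b_{22})$.

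Finally I would sum. The linear terms add to $3D$ and the quadratic terms to $rs+D^2-qr-ps$, so in $g=\binom{D-1}{2}-\sum_P\delta_P$ the $D^2$ and $D$ contributions cancel against $\binom{D-1}{2}$, leaving $g=\tfrac12\bigl(2+(qr+ps-rs)-\textstyle\sum\gcd\bigr)$. Because $(p,q)$ lies above the line through $(r,0)$ and $(0,s)$ one has $qr+ps-rs>0$, and this quantity is exactly $2\,\mathrm{Area}(T)=|\det(B)|$; substituting the three edge lengths yields the stated formula. The remaining work is routine bookkeeping, the only genuine content being the nondegeneracy step flagged above and the correct identification, at each corner, of the two pure-power exponents with the two adjacent edges of $T$.
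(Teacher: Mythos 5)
Your strategy coincides with the paper's: pass to the genus--delta formula~\eqref{www}, confine the singularities to the coordinate points, evaluate each $\delta_P$ with Lemma~\ref{deltax}, and sum. The genuine gap is your claim that, once the Newton triangle $T$ is translated so as to touch all three sides of $\Delta_D=\mathrm{conv}\{(0,0),(D,0),(0,D)\}$, the mixed term at every coordinate point lies \emph{above} the chord joining the two pure powers, so that only the first case of Lemma~\ref{deltax} is ever needed. That is true only when each side of $\Delta_D$ carries its own vertex of $T$ (this is the paper's Case (2)); but ``touching all three sides'' is equally achieved when one vertex of $T$ sits at a corner of $\Delta_D$ (doing double duty for two sides) while the third vertex lies in the interior, and then at the opposite coordinate point the mixed term falls \emph{below} the chord. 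For example, for $\rho^i x^4+\rho^j y^3=xy$ the normalized triangle is $\{(4,0),(0,3),(1,1)\}$, which touches all three sides of $\Delta_4$; at $[0:0:1]$ the local equation is $\rho^i x^4+\rho^j y^3-xy$ with mixed term $(1,1)$ below the segment from $(4,0)$ to $(0,3)$. The first-case formula gives $\delta=\frac12(12-4-3+1)=3$, whereas the correct second-case value is $\delta=\frac12(4+3-4-3+\gcd(1,2)+\gcd(1,3))=1$, and only the latter reproduces the stated genus $g=\frac12(5-1-1-1+2)=2$. This is exactly the paper's Case (1), where the second formula of Lemma~\ref{deltax} is invoked at $[0:0:1]$.

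Your own caveat about ``one vertex of $T$ on two sides of $\Delta_D$'' does not repair this: the limiting cases $r=0$ or $s=0$ of Lemma~\ref{deltax} only dispose of coordinate points lying off the curve, while here the problematic point is on the curve and is its worst singularity. The final bookkeeping is also tied to the generic picture, since you write the vertices as $(r,0),(0,s),(p,q)$ with $p+q=D$ and identify $qr+ps-rs$ with $2\,\mathrm{Area}(T)=|\det(B)|$; when the third vertex is interior this parametrization and the sign argument both fail. To close the gap you must split into the two configurations as the paper does (equivalently, treat separately the case where the mixed vertex lies below the chord) and use the second formula of Lemma~\ref{deltax} there. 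The remaining ingredients of your write-up --- smoothness on the torus via $\det(B)\neq 0$ away from characteristic divisors, and the identification of the three $\gcd$'s with the lattice lengths of the edges of $T$ --- are correct and consistent with the paper.
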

\begin{proof}
By using the irreducibility of the curve, we can reduce the proof to the case $a_{12}=a_{21}=0$ and $a_{11}\geq a_{22}$.
In this case, the genus can be computed using the following formula:
\begin{equation}\label{www}
 g(\overline{\Cij})=\frac{(m-1)(m-2)}{2}-\sum_{P}\delta_P,
\end{equation}
where $m$ is the degree of the curve, $P$ ranges over all singular points of $\overline{\Cij}$, and $\delta_P$ is the
$\delta$-invariant of $\overline{\Cij}$ at $P$. Since we are assuming $\det(B)\neq 0$, the set of singular points is
contained in $\{[1:0:0], [0:1:0], [0:0:1]\}$.

We have to consider two cases: (1) $m=a_{11}>a_{31}+a_{32}$, (2) $m=a_{31}+a_{32}\geq a_{11}$.

Case (1): The projective closure of $\Cij$ is given by the homogeneous polynomial
\[
  F(x,y,z)=\rho^ix^{a_{11}}+\rho^jy^{a_{22}}z^{a_{11}-a_{22}}-x^{a_{31}}y^{a_{32}}z^{a_{11}-a_{31}-a_{32}}. 
\]
We can further assume without loss of generality that the point $(a_{31},a_{32})$ is below the line that
connects the points $(a_{11},0)$ and $(0,a_{22})$, i.e. the Newton polygon of $F$ has two edges. Otherwise,
we would simply exchange $y$ by $z$ and start over. Note that in the exceptional case when $a_{11}=0$ ($a_{22}=0$), we should
also have $a_{31}=0$ ($a_{32}=0$), respectively. The singular points are $[0:1:0]$ and $[0:0:1]$, and the
local equations of $\overline{\Cij}$ at those points are $\rho^ix^{a_{11}}+\rho^jz^{a_{11}-a_{22}}-x^{a_{31}}z^{a_{11}-a_{31}-a_{32}}$
and $\rho^ix^{a_{11}}+\rho^jy^{a_{22}}-x^{a_{31}}y^{a_{32}}$, respectively. By Lemma~\ref{deltax}, we have
\[
 \begin{aligned}
   \delta_{[0:1:0]} &= \frac12 \left(a_{11}(a_{11}-a_{22})-a_{11}-(a_{11}-a_{22})+\gcd(a_{11},a_{11}-a_{22})\right),\\
   \delta_{[0:0:1]} &= \frac12 \left(a_{11}a_{32}+a_{22}a_{31}-a_{11}-a_{22}+\gcd(a_{31}, a_{22}-a_{32})+\gcd(a_{32}, a_{11}-a_{31})\right).
 \end{aligned}
\]
Finally, using~\eqref{www}, we get the desired formula.

Case (2): The projective closure of $\Cij$ is given by the homogeneous polynomial
\[
  F(x,y,z)=\rho^ix^{a_{11}}z^{a_{31}+a_{32}-a_{11}}+\rho^jy^{a_{22}}z^{a_{31}+a_{32}-a_{22}}-x^{a_{31}}y^{a_{32}}. 
\]
The local equations of $\overline{\Cij}$ at $[1:0:0]$, $[0:1:0]$, $[0:0:1]$ are $\rho^iz^{a_{31}+a_{32}-a_{11}}+\rho^jy^{a_{22}}z^{a_{31}+a_{32}-a_{22}}-y^{a_{32}}$, $\rho^ix^{a_{11}}z^{a_{31}+a_{32}-a_{11}}+\rho^jz^{a_{31}+a_{32}-a_{22}}-x^{a_{31}}$, and $\rho^ix^{a_{11}}+\rho^jy^{a_{22}}-x^{a_{31}}y^{a_{32}}$, respectively. By Lemma~\ref{deltax},
\[
 \begin{aligned}
   \delta_{[1:0:0]} &= \frac12 \left(a_{32}(a_{31}+a_{32}-a_{11})-a_{32}-(a_{31}+a_{32}-a_{11})+\gcd(a_{32},a_{31}-a_{11})\right),\\
   \delta_{[0:1:0]} &= \frac12 \left(a_{31}(a_{31}+a_{32}-a_{22})-a_{31}-(a_{31}+a_{32}-a_{22})+\gcd(a_{31},a_{32}-a_{22})\right),\\
   \delta_{[0:0:1]} &= \frac12 \left(a_{11}a_{22}-a_{11}-a_{22}+\gcd(a_{11}, a_{22})\right).
 \end{aligned}
\]
The conclusion follows from formula~\eqref{www}.
\end{proof}

\section{Examples}

We study some particular cases of Theorem~\ref{mainth} and Corollary~\ref{maincor} which have
special significance by themselves.

\begin{ex}[Diagonal case]\label{example1}
 The curve $\Cij=\{(x,y)\in(\Fq^*)^2\,:\,\rho^ix^{a_{11}}+\rho^jy^{a_{22}}=1\}$ has $d=\gcd(a_{11},q-1)$, $e=\gcd(a_{22},q-1)$,
 $f=\gcd(d,e)$, and $w=(q-1)/2$ for $q$ odd, and $w=0$ otherwise. Moreover,
 \[
   \coker(B)=\ZZ_{q-1}^2/\langle(a_{11},0),(0,a_{22})\rangle\cong \ZZ_d\oplus\ZZ_e,
 \]
  hence $k=|\coker(B)|=de$. By Theorem~\ref{mainth}\eqref{x4}, we have $N_{i+d,j}=N_{ij}=N_{i,j+e}$, so the matrix
  $[N_{ij}]_{0\leq i,j<q-1}$ has its upper-left block of size $d\times e$ repeated $(q-1)^2/de$ times. As multisets,
  we can write:
  \[
    \{ N_{ij}\,:\, 0\leq i,j<q-1\} = \frac{(q-1)^2}{de}\cdot  \{ N_{ij}\,:\, 0\leq i<d,\, 0\leq j<e\}.
  \]
  Moreover, the sum in Thm~\ref{mainth}\eqref{x2} can be taken from $j=0$ to $j=e-1$. Similarly, the sum~\eqref{x1}
  can be taken from $i=0$ to $i=d-1$.
\end{ex}  

\begin{ex}\label{example2}
  We consider the subcase of Example~\ref{example1} when $a_{11}$ is odd, $a_{22}=2$, and $q$ is odd.
  The constants $d$, $e$, $f$, $k$, $w$ reduce to $d=\gcd(a_{11},q-1)$, $e=2$, $f=1$, $k=2d$, $w=(q-1)/2$, and
  the upper-left block is of size $d\times 2$. Since the second column of this block is the additive inverse of the first
  one, we have, as multisets:
  \[
    \{ N_{ij}\,:\, 0\leq i,j<q-1\} = \frac{(q-1)^2}{2d}\cdot \{ \pm \alpha_0, \pm \alpha_1, \ldots, \pm \alpha_{d-1}\}
  \]
  where $\alpha_i=N_{i0}$. Moreover,
  \[
    \begin{aligned}
     \alpha_0+\alpha_1+\cdots+\alpha_{d-1} &= 0,\\
     \alpha_0^2+\alpha_1^2+\cdots+\alpha_{d-1}^2 &= d(d-1)q.
    \end{aligned}
  \]
  The vector $(\alpha_0,\ldots,\alpha_{d-1})$ is in the intersection of a sphere and a hyperplane in $\RR^d$, i.e.
  the vector $(\alpha_1,\ldots,\alpha_{d-1})$ belongs to a conic in $\RR^{d-1}$. Of course, when $d=1$, the sphere
  reduces to a point.
\end{ex}  

\begin{ex}\label{example3}
 Now, we consider the curves $\Cij=\{(x,y)\in(\Fq^*)^2\,:\,\rho^ix^3+\rho^jy^2=1\}$, which is a particular case of
 the previous example. Clearly, when $q\not\equiv 1\pmod{3}$, we have $d=1$ and all the $N_{ij}$ are zero. For this
 reason, we only consider $q\equiv 1\pmod{3}$, in which case $d=3$:
 \[
  \{ N_{ij}\,:\, 0\leq i,j<q-1\} = \frac{(q-1)^2}{6}\cdot \{ \pm \alpha_0, \pm \alpha_1, \pm \alpha_2\},
 \]
 where $\alpha_0+\alpha_1+\alpha_2=0$ and $\alpha_0^2+\alpha_1^2+\alpha_2^2=6q$.
 \begin{itemize}
  \item If $q=p^{2n}$ for some $p\equiv 2\pmod{3}$, then $\alpha_0=p^n\beta_0$, $\alpha_1=p^n\beta_1$, and $\alpha_2=p^n\beta_2$,
  for some $\beta_0,\beta_1,\beta_2\in\ZZ$ such that $\beta_0+\beta_1+\beta_2=0$ and $\beta_0^2+\beta_1^2+\beta_2^2=6$.
  This implies that, as multisets:
  \[
   \{ N_{ij}\,:\, 0\leq i,j<q-1\} = \frac{(q-1)^2}{6}\cdot \{ \pm p^n, \pm p^n, \mp 2p^n\}.
  \]
  In particular, $N_{ij}\neq 0$ for all $i,j$ and the upper bound of Theorem~\ref{mainth} is sharp for this family.
  \item In constrast, for $p\equiv 1\pmod{3}$, the upper bound is not sharp. For instance, when $q=p=997$, we have
  $\alpha_0=10$, $\alpha_1=49$, $\alpha_2=-59$, but the integer part of the upper bound is $\lfloor(k-d-e-f+2)\sqrt{q}\rfloor=63$.
  Note, however, that
  \begin{equation}\label{opti}
    \max\left\{ x: \begin{array}{c}x,y,z\in\ZZ\\ x+y+z=0\\ x^2+y^2+z^2=6q\end{array}\right\} = 59,
  \end{equation}
  so one may think that the largest $N_{ij}$ can be obtained always by solving optimization problem in Prop.~\ref{lagrange}
  for a function $f:G\to\ZZ$.
  \item In the case $q=7^2$, we have $\{ N_{ij}\,:\, 0\leq i,j<48\} = 384 \cdot \{ \mp 2, \mp 11, \pm 13\}$, so the largest
  $N_{ij}$ is $13$. However, the integer optimization problem~\eqref{opti} gives $14$. This highlights the fact that the relations given in Thm~\ref{mainth} are not always enough to characterize the maximum $N_{ij}$.
 \end{itemize}
\end{ex}

\begin{ex}\label{example4} 
  When $a_{22}=2$, $q$ is odd, and $a_{11}$ is even, the situation is similar, but $f=2$, and item~\eqref{x5} of
  Thm~\ref{mainth}, gives the additional relation $\alpha_0-\alpha_1+\cdots+\alpha_{d-2}-\alpha_{d-1}=0$. All together
  this gives
  \[
    \begin{aligned}
     \alpha_0+\alpha_2+\cdots+\alpha_{d-2} &= 0,\\
     \alpha_1+\alpha_3+\cdots+\alpha_{d-1} &= 0,\\
     \alpha_0^2+\alpha_1^2+\cdots+\alpha_{d-1}^2 &= d(d-2)q.
    \end{aligned}
  \]
\end{ex}

\begin{ex}\label{example5} 
 The curve $\Cij=\{(x,y)\in(\Fq^*)^2\,:\,\rho^ix^3+\rho^jy^2=x\}$ with odd $q$, has $d=2$, $e=f=1$, $k=\gcd(q-1,4)$, and
 $w=(q-1)/2$. When $q\equiv 3\pmod{4}$, we have $k=2$, so $k-d-e-f+2=0$, and in particular $N_{ij}=0$ for all $i,j$. The
 other case, i.e. $q\equiv 1\pmod{4}$, is more interesting. Here $k=4$, and
 \[
   \coker(B)=\ZZ_{q-1}^2/\langle(2,0),(-1,2)\rangle = \{ \overline{(0,0)}, \overline{(1,0)}, \overline{(0,1)}, \overline{(1,1)} \}.
 \]
By Thm~\ref{mainth}\eqref{x4}, $N_{i+2,j}=N_{ij}=N_{i-1,j+2}$, so as multisets
\[
 \{ N_{ij}\,:\, 0\leq i,j<q-1\} = \frac{(q-1)^2}{4}\cdot\{\pm \alpha, \pm\beta\},
\]
where $\alpha=N_{00}$, $\beta=N_{01}$, and $\alpha^2+\beta^2=4q$.
If, we also have $p\equiv 3\pmod{4}$, then $q=p^{2n}$ and the multiset is $\frac{(q-1)^2}{4}\cdot\{\pm 2p^n, 0\}$.
\end{ex}

\Addresses


\begin{thebibliography}{99}
\bibitem{Casas} E.~Casas-Alvero: Singularities of Plane Curves. London Mathematical Society Lecture Note Series 276.
Cambridge University Press, 2000.
\bibitem{CMO} J.I.~Cogolludo, J.~Martin-Morales, J.~Ortigas-Galindo: Local Invariants on Quotient Singularities and a Genus Formula for Weighted Plane Curves. International Mathematics Research Notices, Vol. 2014, No. 13, pp.~3559--3581.
\bibitem{Deligne} P.~Deligne: La conjecture de Weil. I. (French) Inst. Hautes \'Etudes Sci. Publ. Math. No. 43, pp.~273--307, 1974.
\bibitem{Dwork}  B.~Dwork: On the rationality of the zeta function of an algebraic variety. Amer. J. Math. 82, pp.~631--648, 1960.
\bibitem{Gauss} C.F.~Gauss: Disquisitiones Arithmeticae (translated by A.~Clarke). Yale University
Press, 1966.
\bibitem{Groth} A.~Grothendieck: Formule de Lefschetz et rationalit\'e des fonctions L. S\'eminaire Bourbaki, Vol.~9, Exp.~No.~279, pp.~41--55, 1964--1966.
\bibitem{HV49} L.~Hua, H.S.~Vandiver: On the number of solutions of some trinomial equations in a
finite field. Proceedings of the National Academy of Sciences of the USA, Vol. 35, No. 8, pp.~477--481, 1949.
\bibitem{Si86} J.~Silverman: The arithmetic of elliptic curves. Graduate texts in mathematics 106. Springer-Verlag, New York, 1986.
\bibitem{ST92} J.~Silverman, J.~Tate: Rational points on elliptic curves. Springer-Verlag, New York, 1992.
\bibitem{Weil49} A.~Weil: Numbers of solutions of equations in finite fields. Bull. Amer. Math. Soc. 55, pp.~497--508, 1949.
\end{thebibliography}
\end{document}